\DeclareMathOperator{\tb}{tb}
\DeclareMathOperator{\rot}{rot}
\DeclareMathOperator{\lk}{lk}
\DeclareMathOperator{\de}{d}
\DeclareMathOperator{\e}{e}
\DeclareMathOperator{\cs}{cs}
\newcommand{\Z}{\mathbb{Z}}
\newcommand{\N}{\mathbb{N}}
\newcommand{\xist}{\xi_{\mathrm{st}}}
\newcommand{\wst}{\omega_{\mathrm{st}}}
\newcommand{{\def\svgwidth{1,6ex}\,\,\input{PushOff.pdf_tex}\,\,}}{{\def\svgwidth{1,6ex}\,\,\input{PushOff.pdf_tex}\,\,}} 
\newtheorem{Theorem}{Theorem}[section]
\newtheorem{thm}[Theorem]{Theorem}
\newtheorem{lem}[Theorem]{Lemma}
\newtheorem{prop}[Theorem]{Proposition}
\newtheorem{cor}[Theorem]{Corollary}
\newtheorem*{Theorem-ohne}{Theorem}
\newtheorem{ques}[Theorem]{Question}
\theoremstyle{definition}
\newtheorem{rem}[Theorem]{Remark}
\begin{document}


\title[Contact surgery graphs]{Contact surgery graphs}

\author{Marc Kegel}
\address{Humboldt-Universit\"at zu Berlin, Rudower Chaussee 25, 12489 Berlin, Germany.}
\email{kegemarc@math.hu-berlin.de, kegelmarc87@gmail.com}

\author{Sinem Onaran}
\address{Department of Mathematics, Hacettepe University, 06800 Beytepe-Ankara, Turkey.}
\email{sonaran@hacettepe.edu.tr}

\date{\today}


\begin{abstract}
We define a graph encoding the structure of contact surgery on contact $3$-manifolds and analyze its basic properties and some of its interesting subgraphs.
\end{abstract}

\makeatletter
\@namedef{subjclassname@2020}{%
  \textup{2020} Mathematics Subject Classification}
\makeatother

\subjclass[2020]{53D35; 53D10, 57K10, 57R65, 57K10, 57K33} 

\keywords{Contact surgery, Legendrian knots, symplectic and Stein cobordisms}

\maketitle


\section{Introduction}

In an unpublished work, William Thurston defined a graph consisting of a vertex $v_M$ for every diffeomorphism type of a closed, orientable $3$-manifold $M$. Two vertices $v_M$ and $v_{M'}$ are connected by an edge if there exist a Dehn surgery between $M$ and $M'$. In~\cite{HW15}, this  graph is called the \textit{big Dehn surgery graph} and  studied in various ways.

In this paper we define a directed graph encoding the structure of contact $(\pm1)$-surgeries on contact $3$-manifolds. The \textit{contact surgery graph} $\Gamma$ is the graph consisting of a vertex $v_{(M,\xi)}$ for every contactomorphism type of a contact $3$-manifold $(M,\xi)$. Whenever there exists a Legendrian knot $L$ in $(M_1,\xi_1)$ such that contact $(-1)$-surgery along $L$ yields $(M_2,\xi_2)$ we introduce a directed edge pointing from $v_{(M_1,\xi_1)}$ to $v_{(M_2,\xi_2)}$.

The contact surgery graph is closely tied to the properties of Stein or Weinstein cobordisms and fillings of contact manifolds since contact $(-1)$-surgery along a Legendrian knot in a contact $3$-manifold $(M,\xi)$, also known as Legendrian surgery, corresponds to the attachment of a Weinstein $2$-handle to the symplectization of $(M,\xi)$. The inverse operation of a contact $(-1)$-surgery is called contact $(+1)$-surgery. 

\subsection{Properties of the contact surgery graph}
In Section~\ref{sec:properties} we study the basic properties of $\Gamma$. First, we observe that the contact surgery graph $\Gamma$ is connected by the work of Ding--Geiges~\cite{DG04} who showed that any contact $3$-manifold can be constructed from the standard tight contact structure $\xist$ on $S^3$ by a sequence of $(\pm 1)$-contact surgeries. On the other hand, we know from~\cite{El90,Go98} that a contact manifold $(M,\xi)$ is Stein fillable if and only if there exists a directed path from $v_{(\#_k S^1\times S^2,\xist)}$ to $v_{(M,\xi)}$ and thus $\Gamma$ is not strongly connected since there exists non-Stein fillable contact manifolds. (Recall that an oriented graph is strongly connected if for any pair of vertices $(v_1,v_2)$ there exist a path from $v_1$ to $v_2$ following the orientations of the edges.) Etnyre--Honda~\cite{EH02} showed that there exists a directed path from any vertex corresponding to a given overtwisted contact manifold to any other vertex, i.e. $\Gamma$ is strongly connected from any vertex corresponding to an overtwisted contact manifold. The question if there exists a vertex such that $\Gamma$ is strongly connected to that vertex is equivalent to the open question if there exists a maximal element with respect to the Stein cobordism relation~\cite{We14}. 

We show that $\Gamma$ stays connected after removing an arbitrary finite collection of vertices and edges. Recall that a graph is called $k$-\textit{connected} if it is still connected after removing $k$ arbitrary vertices and $k$-\textit{edge-connected} if it remains connected after removing $k$ arbitrary edges.

\begin{thm}\label{thm:thmk-connected}
	The contact surgery graph $\Gamma$ is $k$-connected and $k$-edge-connected for any integer $k\geq0$. 
\end{thm}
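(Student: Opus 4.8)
The plan is to prove the equivalent statement that $\Gamma$ stays connected after deleting any finite set $D$ of vertices, and also after deleting any finite set of edges; taking $|D|=k$ then gives $k$-connectedness and $k$-edge-connectedness for every $k\geq 0$. Since $\Gamma$ is connected by Ding--Geiges~\cite{DG04}, everything reduces to a \emph{rerouting lemma}: every edge of $\Gamma$ can be replaced by a path of length three whose two interior vertices and whose three edges may be chosen outside any prescribed finite set; moreover the construction depends on a parameter $n\geq 2$ that can be fixed in advance.

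Granting the lemma, fix the finite deletion set $D$ and two vertices $v,w\notin D$, and choose any path $P$ from $v$ to $w$ in $\Gamma$ (possible by connectedness). If $D$ is a set of edges, replace each edge of $P$ lying in $D$ by a detour from the lemma, chosen successively to avoid $D$ and everything used so far; this produces a walk from $v$ to $w$ in $\Gamma\setminus D$. If $D$ is a set of vertices, then for each $u\in P\cap D$ (with contact manifold $(M_u,\xi_u)$) apply the detour construction below to the two edges of $P$ incident to $u$ using one common parameter $n$; both detours then pass through the common vertex $v_{(M_u\#L(n,1),\,\cdot\,)}$, and splicing the two detours there produces a path joining the two $P$-neighbours of $u$ that avoids $u$ (the edge to $u$ coming from one detour cancels the edge from $u$ in the other). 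Doing this for every $u\in P\cap D$, again avoiding $D$ and all previously used vertices, yields a walk from $v$ to $w$ in $\Gamma\setminus D$. In either case $v$ and $w$ lie in the same component of $\Gamma\setminus D$; since $v,w$ were arbitrary, $\Gamma\setminus D$ is connected.

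For the lemma, say the given edge is witnessed by contact $(-1)$-surgery along a Legendrian knot $L$ in $(M_1,\xi_1)$, with result $(M_2,\xi_2)$. For an integer $n\geq 2$, fix a Darboux ball $B\subset (M_1,\xi_1)$ disjoint from $L$ and a Legendrian unknot $K_n\subset B$ with $\tb(K_n)=-(n-1)$; contact $(-1)$-surgery along $K_n$ produces $(M_1\#L(n,1),\,\xi_1\#\zeta_n)$, where $(L(n,1),\zeta_n)$ is the contact manifold obtained from this surgery. Because $B$ is disjoint from $L$ the two surgeries are supported in disjoint regions, so $L$ survives into $(M_1\#L(n,1),\,\xi_1\#\zeta_n)$ and contact $(-1)$-surgery along it yields $(M_2\#L(n,1),\,\xi_2\#\zeta_n)$; moreover, by the cancellation lemma for contact surgeries~\cite{DG04}, contact $(-1)$-surgery along $K_n$ inside a Darboux ball of $(M_2,\xi_2)$ likewise yields $(M_2\#L(n,1),\,\xi_2\#\zeta_n)$. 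These three surgeries form a path of length three from $v_{(M_1,\xi_1)}$ to $v_{(M_2,\xi_2)}$ through $v_{(M_1\#L(n,1),\,\xi_1\#\zeta_n)}$ and $v_{(M_2\#L(n,1),\,\xi_2\#\zeta_n)}$, and the same works with $L$ replaced by the Legendrian co-core of the surgery handle when the edge is traversed against its orientation. Finally, as $n$ varies over $n\geq 2$ the fundamental groups $\pi_1(M_i)\ast\Z/n$ are pairwise non-isomorphic (Grushko), so the interior vertices of these detours are pairwise distinct, as are their new edges; hence for all but finitely many $n$ the detour misses any prescribed finite set, which is what the argument above needs.

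I expect the main obstacle to be the rerouting lemma, and within it the one point needing care is not the existence of the connected-sum surgeries but their compatibility: one must verify that the auxiliary $K_n$-surgery in a Darboux ball disjoint from $L$ commutes with the $L$-surgery, and that the extra summand $(L(n,1),\zeta_n)$ is the \emph{same} contact manifold whether it is created in $(M_1,\xi_1)$ before the $L$-surgery or in $(M_2,\xi_2)$ after it (this is what makes the common vertex for the vertex-deletion case well defined). This rests on Darboux's theorem (any two Darboux balls in a contact $3$-manifold are ambiently contactomorphic), the uniqueness of Legendrian unknots of a given Thurston--Bennequin invariant, and the cancellation lemma for contact $(\pm1)$-surgeries of~\cite{DG04}; granting these the compatibility is routine, and so is the bookkeeping needed to reroute the finitely many offending edges or vertices of $P$ simultaneously.
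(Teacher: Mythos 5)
Your proposal is correct and uses essentially the same mechanism as the paper: detour around the finite deletion set by connect-summing with $L(n,1)$ via a contact $(-1)$-surgery on a Legendrian unknot in a Darboux ball disjoint from the surgery knot, with $n$ chosen (as only finitely many values are bad) so that the new vertices and edges avoid the deleted set. The paper packages this by lifting the whole path at once with a single $U_p(-1)$ and cancelling at the end with a $(+1)$-framed meridian, whereas you reroute edge-by-edge and splice at deleted vertices, but this is only a difference in bookkeeping, not in the underlying idea.
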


We equip $\Gamma$ with its graph metric $\textrm{d}$. Then the distance from $v_{(S^3,\xist)}$ to another vertex $v_{(M,\xi)}$ equals the contact $(\pm1)$-surgery number $\cs_{\pm1}(M,\xi)$, i.e. the minimal number of components of a contact $(\pm1)$-surgery link $L$ in $(S^3,\xist)$ describing $(M,\xi)$~\cite{EKO22}.

\begin{prop}\label{prop:diameterDegree}
The contact surgery graph $\Gamma$ has infinite diameter, infinite indegree and infinite outdegree.
\end{prop}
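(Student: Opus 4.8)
The plan is to treat the three assertions independently, in each case producing an explicit infinite family and, to separate the resulting vertices, using only that the underlying smooth $3$-manifolds are pairwise non-homeomorphic, so that no contact-geometric invariant is needed.

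For the \emph{infinite diameter}, I would combine the identification $\mathrm{d}\bigl(v_{(S^3,\xist)},v_{(M,\xi)}\bigr)=\cs_{\pm1}(M,\xi)$ recalled above with a homological lower bound on the surgery number. If $(M,\xi)$ is obtained from $(S^3,\xist)$ by contact $(\pm1)$-surgery along an $n$-component Legendrian link, then the underlying smooth manifold arises from integer surgery on an $n$-component link in $S^3$, so $H_1(M;\Z)$ admits a presentation with $n$ generators and $n$ relations; hence $\cs_{\pm1}(M,\xi)$ is bounded below by the minimal number of generators of $H_1(M;\Z)$. Applying this to $M_n=\#_n(S^1\times S^2)$ with the contact structure $\#_n\xist$ gives $H_1(M_n;\Z)\cong\Z^n$ and therefore $\mathrm{d}\bigl(v_{(S^3,\xist)},v_{(M_n,\#_n\xist)}\bigr)\geq n$. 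Since all these distances are finite (by Ding--Geiges every vertex lies at finite distance from $v_{(S^3,\xist)}$) but unbounded in $n$, the graph $\Gamma$ has infinite diameter.

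For the \emph{infinite outdegree}, I would show that already the single vertex $v_{(S^3,\xist)}$ has infinitely many outgoing edges. For each $n\geq1$ choose a Legendrian unknot $U_n\subset(S^3,\xist)$ with $\tb(U_n)=-n$, obtained by repeatedly stabilising the standard Legendrian unknot. Contact $(-1)$-surgery along $U_n$ is integer surgery with framing $\tb(U_n)-1=-n-1$ on the unknot, which produces a tight contact structure on the lens space $L(n+1,n)$. As the manifolds $L(n+1,n)$, $n\geq1$, are pairwise non-homeomorphic, the corresponding vertices are pairwise distinct, so the outdegree of $v_{(S^3,\xist)}$ is infinite. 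For the \emph{infinite indegree}, recall that a directed edge into a vertex corresponds to a contact $(+1)$-surgery performed on the contact manifold at that vertex. Contact $(+1)$-surgery along the same unknots $U_n\subset(S^3,\xist)$ is integer surgery with framing $\tb(U_n)+1=-n+1$ on the unknot; for $n\geq3$ this yields the lens space $L(n-1,n-2)$, and these are again pairwise non-homeomorphic, so $v_{(S^3,\xist)}$ receives infinitely many edges and its indegree is infinite.

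I do not expect a serious obstacle: all three arguments reduce to standard surgery descriptions of lens spaces and of connected sums of $S^1\times S^2$, together with the homological bound on $\cs_{\pm1}$. The only point deserving a comment is that contact $(+1)$-surgery need not determine the contact structure uniquely, but this is immaterial here, since every admissible choice still contributes an edge and the potential ambiguity disappears upon passing to the underlying smooth manifolds, which already distinguish all the vertices involved; the same remark shows that the conclusions about indegree and outdegree in fact hold at many other vertices as well, by running the argument with Legendrian unknots (or other Legendrian knots of unbounded Thurston--Bennequin invariant) inside an arbitrary fixed $(M,\xi)$.
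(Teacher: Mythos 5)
Your proof is correct, and for the diameter it is essentially the paper's own argument: the paper just notes that a single surgery can change the rank of $H_1$ by at most one, which is the same homological bound you extract via the identification of the distance with $\cs_{\pm1}$ and the family $\#_n(S^1\times S^2)$. For the degree statements your route is genuinely different. The paper works directly at an arbitrary vertex $v_{(M,\xi)}$: it performs contact $(-1)$-surgery (resp.\ $(+1)$-surgery) along Legendrian knots in a Darboux ball with fixed $\tb=1$ (resp.\ $\tb=-1$) and varying even rotation number $n$, so that smoothly one always does a $0$-framed surgery with $H_1(L(-1))=H_1(M)\oplus\Z_{\mu_L}$, and it distinguishes the infinitely many outputs by the Euler class $\e(L(-1))=\e(M,\xi)+n\mu_L$ of the resulting contact structures; the separating invariant is thus homotopical, the topology being kept essentially fixed. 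You instead vary the topology: stabilized Legendrian unknots of decreasing $\tb$, with the outputs separated by the underlying manifolds $L(n+1,n)$, resp.\ $L(n-1,n-2)$. This is more elementary (no Euler class computation, only classical surgery descriptions of lens spaces), and your use of the equivalence between edges into a vertex and contact $(+1)$-surgeries on that vertex is the standard Ding--Geiges cancellation, also used implicitly in the paper. Two minor points: the paper proves infinite in- and outdegree at \emph{every} vertex, while your main argument is at $v_{(S^3,\xist)}$; your closing remark does close this gap, since unknots in a Darboux ball of an arbitrary $(M,\xi)$ yield $M\#L(n+1,n)$, resp.\ $M\#L(n-1,n-2)$, which are pairwise distinct already because the order of the torsion subgroup of $H_1$ grows with $n$. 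Also, your caveat about possible non-uniqueness of contact $(+1)$-surgery is unnecessary: contact $(\pm1)$-surgery along a Legendrian knot is uniquely defined.
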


Next, we study the existence of Euler- and Hamiltonian walks and paths. We first recall the necessary definitions. A \textit{track} $t$ in $\Gamma$ is an infinite sequence whose terms are alternately vertices and edges of $\Gamma$ starting at a vertex and such that any edge in $t$ joins the vertices preceding and following the edge in $t$. A \textit{Hamiltonian walk} of $\Gamma$ is a track running through any vertex of $\Gamma$ at least once. A \textit{Hamiltonian path (Eulerian path)} of $\Gamma$ is a track running through any vertex (edge) of $\Gamma$ exactly once. If a track is following the direction of the oriented edges it is called \textit{ditrack} and then the definitions of Hamiltonian diwalks, and Hamiltonian- and Eulerian dipaths are obvious.

\begin{thm} \label{thm:Eulerpaths}The contact surgery graph $\Gamma$ admits Eulerian and Hamiltonian paths and also Hamiltonian walks. On the other hand, there exists no Hamiltonian diwalk, no Hamiltonian dipath and no Eulerian dipath in $\Gamma$.
\end{thm}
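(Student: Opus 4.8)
The plan is to split the statement into its undirected half (existence of Hamiltonian walks, Hamiltonian paths, Eulerian paths) and its directed half (non-existence of the corresponding ditracks), treating the first half by a greedy ``back-and-forth'' construction powered by the strong connectivity of Theorem~\ref{thm:thmk-connected} and the second half by finding a monotone invariant.

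For the undirected half I would first note that $\Gamma$ is countably infinite: by Ding--Geiges~\cite{DG04} every contact $3$-manifold is a contact $(\pm1)$-surgery on a Legendrian link in $(S^3,\xist)$, and up to Legendrian isotopy there are only countably many such links (count front diagrams), while there are already infinitely many contact structures. A Hamiltonian walk is then cheap: enumerate the vertices $v_1,v_2,\dots$ and, using connectedness of $\Gamma$, concatenate paths $v_1\rightsquigarrow v_2\rightsquigarrow v_3\rightsquigarrow\cdots$. For a Hamiltonian path I would build a nested chain of finite simple paths $P_1\subset P_2\subset\cdots$, where $P_{n+1}$ is obtained from $P_n$ by appending a new simple arc at the terminal vertex of $P_n$, arranged so that $P_n$ contains the lowest-indexed vertex not yet used; the union is then a ray through every vertex exactly once. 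The analogous construction with trails in place of paths and edges in place of vertices gives an Eulerian path. The one thing needing care — and the main obstacle in this half — is that the greedy step never gets stuck: when $P_n$ has $m$ vertices and terminates at $x$, one needs a simple arc from $x$ to the target vertex avoiding all $m-1$ other vertices of $P_n$, and this is exactly what $(m-1)$-connectivity of $\Gamma$ in Theorem~\ref{thm:thmk-connected} supplies; for the Eulerian version one reroutes inside $\Gamma$ with the finitely many already-used edges deleted, which stays connected by the $k$-edge-connectivity of Theorem~\ref{thm:thmk-connected}. A short bookkeeping argument (the lowest unused index strictly increases at each stage) shows the union really exhausts all vertices, respectively all edges.

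For the directed half the key input — and it is comparatively soft once spotted — is that Stein fillability is monotone along directed edges of $\Gamma$: a directed edge is a contact $(-1)$-surgery, i.e.\ the attachment of a Weinstein $2$-handle, and this handle may be attached to any Stein filling of the source contact manifold, so the target of a directed edge out of a Stein fillable vertex is again Stein fillable. I would then argue by contradiction: a Hamiltonian diwalk $v_0\to v_1\to v_2\to\cdots$ must pass through $v_{(S^3,\xist)}=v_T$ for some $T$, after which every $v_t$ with $t\ge T$ is Stein fillable; but the diwalk must also visit each of the infinitely many pairwise non-contactomorphic overtwisted contact structures on $S^3$ (infinitely many by Eliashberg's classification, none Stein fillable by Eliashberg--Gromov), so all of them would occur among the finitely many vertices $v_0,\dots,v_{T-1}$ — absurd. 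Hence there is no Hamiltonian diwalk; a Hamiltonian dipath is in particular a Hamiltonian diwalk; and an Eulerian dipath traverses every edge of $\Gamma$ and therefore visits every vertex (each vertex has positive out-degree by Proposition~\ref{prop:diameterDegree}), so it too is a Hamiltonian diwalk. Thus none of the three directed objects exists.

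Finally I would underline why the directed analogues of the existence statements fail while the undirected ones hold: Stein fillability propagates forward but not backward along directed edges — indeed by Etnyre--Honda~\cite{EH02} an overtwisted, hence non-fillable, contact manifold already has a directed path to every vertex — so once a single ditrack has entered the fillable part of $\Gamma$ it can never return to the infinitely many non-fillable vertices, whereas forgetting orientations the high connectivity of Theorem~\ref{thm:thmk-connected} makes the greedy constructions go through without obstruction.
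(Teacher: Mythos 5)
Your proposal is correct, but it takes a genuinely different route from the paper in both halves. For the undirected statements the paper simply verifies the hypotheses (connected, $k$-connected and $k$-edge-connected for all $k$, infinite degree at every vertex, countability) and invokes the classical infinite-graph theorems of Erd\H{o}s--Gr\"unwald--V\'azsonyi and Nash-Williams \cite{EGV36,EGV38,Na71}; you instead give a self-contained greedy construction, and your bookkeeping is sound: the $(m-1)$-connectivity of Theorem~\ref{thm:thmk-connected} is exactly what lets the simple-path extension avoid the previously used vertices, and $k$-edge-connectivity plays the same role for trails in the Eulerian case, so you are in effect reproving the special cases of those theorems that are needed here (the high connectivity is what rules out the standard counterexamples such as the double ray). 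For the directed statements the paper's obstruction is Wand's theorem that contact $(-1)$-surgery preserves tightness, so a Hamiltonian diwalk would have to exhaust the infinitely many overtwisted vertices before ever reaching a tight one; your obstruction is the forward monotonicity of Stein fillability under Weinstein $2$-handle attachment combined with Eliashberg--Gromov (fillable implies tight, so the infinitely many overtwisted structures on $S^3$ are never reachable after the walk passes $(S^3,\xist)$). Both yield the same finiteness contradiction; your version has the mild advantage of avoiding Wand's deep theorem (which the paper does need elsewhere, e.g.\ for Theorem~\ref{thm:sub}), while the paper's version is shorter to state. Your explicit reductions (a Hamiltonian dipath is a Hamiltonian diwalk; an Eulerian dipath visits every vertex because every vertex has positive degree, hence is a Hamiltonian diwalk) are correct and are only implicit in the paper.
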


\subsection{Contact geometric subgraphs of \texorpdfstring{$\Gamma$}{Gamma}}

In Section~\ref{sec.contact} we study interesting contact geometric subgraphs of $\Gamma$. We define the subgraphs $\Gamma_{\textrm{OT}}$, $\Gamma_{\textrm{tight}}$, $\Gamma_{\textrm{Stein}}$, $\Gamma_{\textrm{strong}}$, $\Gamma_{\textrm{weak}}$ and $\Gamma_{\textrm{c}\neq0}$ consisting of vertices (and the corresponding edges connecting two such vertices) of $\Gamma$ representing contact manifolds which are overtwisted, tight, Stein fillable, strongly fillable, weakly fillable or with a non-vanishing contact class $\textrm{c}$ in Heegaard Floer homology. We analyze some of the basic properties of these subgraphs and in particular prove the following results.

\begin{thm}\label{thm:sub}
	$\Gamma_{\textrm{OT}}$ is strongly connected. Each of $\Gamma_{\textrm{tight}}$, $\Gamma_{\textrm{Stein}}$, $\Gamma_{\textrm{strong}}$, $\Gamma_{\textrm{weak}}$ and $\Gamma_{\textrm{c}\neq0}$ is connected, but not strongly connected.
\end{thm}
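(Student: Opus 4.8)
The plan is to handle connectivity and the failure of strong connectivity separately, and to deal with $\Gamma_{\textrm{OT}}$ on its own since it behaves differently from the others.

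First I would prove that $\Gamma_{\textrm{OT}}$ is strongly connected. This is essentially a restatement of the Etnyre--Honda result quoted in the introduction: from any overtwisted contact manifold there is a directed path to any other contact manifold, and in particular to any other overtwisted one. The only thing to check is that the intermediate vertices along such a path can be taken to lie in $\Gamma_{\textrm{OT}}$, i.e.\ that one never has to pass through a tight manifold. This is immediate because performing contact $(-1)$-surgery on a Legendrian knet contained in a Darboux ball disjoint from an overtwisted disk leaves the overtwisted disk untouched, so overtwistedness is preserved along the relevant part of the path; alternatively one invokes that the Ding--Geiges/Etnyre--Honda construction can be arranged to keep an overtwisted disk in the complement of the surgery link. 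Hence $\Gamma_{\textrm{OT}}$ is strongly connected.

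Next, connectivity of the remaining five subgraphs. For each class $\mathcal{C}\in\{\textrm{tight},\textrm{Stein},\textrm{strong},\textrm{weak},\textrm{c}\neq 0\}$ I would show that every vertex of $\Gamma_{\mathcal{C}}$ can be joined within $\Gamma_{\mathcal{C}}$ to a fixed base vertex, which I would take to be $v_{(S^3,\xist)}$ (note $(S^3,\xist)$ lies in all five subgraphs). The key input is the Ding--Geiges theorem: any $(M,\xi)$ in one of these classes is obtained from $(S^3,\xist)$ by contact $(\pm 1)$-surgery on a Legendrian link $L=L_1\cup\dots\cup L_n$. The subtlety is that the intermediate manifolds obtained after surgering $L_1,\dots,L_j$ need not lie in $\mathcal{C}$. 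For the Stein case this is automatic in the other direction: contact $(-1)$-surgery preserves Stein fillability, so starting from $(S^3,\xist)$ and doing only the negative surgeries of $L$ first, then the positive ones, gives a path in $\Gamma$ all of whose intermediate vertices are at worst reached by further surgery --- but the cleanest argument is that any Stein fillable $(M,\xi)$ is, by Eliashberg's theorem (also quoted in the introduction), obtained from $(\#_k S^1\times S^2,\xist)$ by contact $(-1)$-surgeries only, and $(\#_k S^1\times S^2,\xist)$ is connected to $(S^3,\xist)$ inside $\Gamma_{\textrm{Stein}}$ by the chain of contact $(-1)$-surgeries along Legendrian unknots with $\tb=-1$ whose inverses split off the $S^1\times S^2$ summands; every intermediate manifold in this description is again Stein fillable. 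For tight, strongly fillable, weakly fillable, and $\textrm{c}\neq 0$, I would use the same strategy, exploiting that contact $(-1)$-surgery (Legendrian surgery) \emph{preserves} each of these four properties: it preserves tightness of fillable manifolds via the induced Stein/symplectic cobordism, it preserves strong and weak fillability by attaching a Weinstein $2$-handle to a filling, and it preserves non-vanishing of the contact invariant $\textrm{c}$ by the functoriality of $\textrm{c}$ under Stein cobordisms (Ozsv\'ath--Szab\'o, Lisca--Stipsicz). So one should describe each such $(M,\xi)$ as the result of contact $(-1)$-surgeries on a Legendrian link in $(\#_k S^1\times S^2,\xist)$ and again connect $(\#_k S^1\times S^2,\xist)$ to the base vertex. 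For the tight case, a Stein fillable representative need not exist for every tight manifold, so there I would instead argue connectivity by a different route: given tight $(M,\xi)$, it has a Stein fillable (indeed $(S^3,\xist)$-) cobordism \emph{into} it is not available, so I would use that $(M,\xi)$ embeds in a closed Stein fillable manifold or, more simply, note that the statement ``$\Gamma_{\textrm{tight}}$ is connected'' can be reduced to the other cases once one knows every tight contact manifold is cobordant (in $\Gamma$) to a fillable one --- this is the point I expect to need the most care.

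Finally, the failure of strong connectivity for all five. For $\Gamma_{\textrm{tight}}$ it is enough to exhibit a tight $(M,\xi)$ that is not Stein fillable (these exist, e.g.\ certain tight contact structures on torus bundles, or Etnyre--Honda's tight non-fillable examples) together with the observation that if $\Gamma_{\textrm{tight}}$ were strongly connected there would be a directed path from $v_{(S^3,\xist)}$ to it consisting of contact $(-1)$-surgeries, forcing Stein fillability --- contradiction. The same argument with the same example handles $\Gamma_{\textrm{strong}}$, $\Gamma_{\textrm{weak}}$ and $\Gamma_{\textrm{c}\neq 0}$, provided one picks the example to additionally fail strong fillability, weak fillability, and to have $\textrm{c}=0$; Etnyre--Honda's tight non-fillable contact structures, or the $\textrm{c}=0$ tight examples of Ghiggini--Lisca--Stipsicz, can be chosen to satisfy everything needed. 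For $\Gamma_{\textrm{Stein}}$ strong connectivity is equivalent to the existence of a maximal element under the Stein cobordism relation, which is open; but to merely show $\Gamma_{\textrm{Stein}}$ is \emph{not} strongly connected I would argue directly that there is no directed path from $v_{(M,\xi)}$ back to $v_{(S^3,\xist)}$ whenever $(M,\xi)\neq(S^3,\xist)$ is Stein fillable --- for instance because a nontrivial sequence of Weinstein handle attachments strictly increases, say, $b_1$ or strictly changes $b_2^+$ of the filling (or, cleanly, because $(S^3,\xist)$ bounds a filling with trivial homology while Legendrian surgery can only enlarge the filling), so one never returns to $(S^3,\xist)$; hence $\Gamma_{\textrm{Stein}}$ is not strongly connected. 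The main obstacle throughout is the tight, non-fillable case: both the connectivity half (finding a path staying tight) and the choice of a single example simultaneously witnessing all four failures of strong connectivity require invoking the right known constructions, and I would organize the write-up around isolating that one example.
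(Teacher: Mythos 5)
Both halves of your argument have gaps that the paper's proof is specifically designed to avoid. For connectivity, your plan of presenting $(M,\xi)$ as the result of contact $(-1)$-surgeries on $(\#_k S^1\times S^2,\xist)$ works \emph{only} in the Stein case: having such a presentation is precisely equivalent to Stein fillability, so for strongly fillable, weakly fillable or $\textrm{c}\neq0$ manifolds that are not Stein fillable (e.g.\ \cite{El96,Gh08}) no such description exists -- you flag this difficulty for $\Gamma_{\textrm{tight}}$ but it afflicts $\Gamma_{\textrm{strong}}$, $\Gamma_{\textrm{weak}}$ and $\Gamma_{\textrm{c}\neq0}$ just as much, and you offer no route around it. The paper's missing idea is: do one contact $(+1)$-surgery from $(S^3,\xist)$ to the overtwisted $(S^3,\xi_1)$, use Etnyre--Honda \cite{EH02} to get a directed path of $(-1)$-surgeries from $(S^3,\xi_1)$ to $(M,\xi)$, turn the whole thing into one surgery link in $(S^3,\xist)$ with a single $(+1)$-coefficient, and then \emph{reorder} the surgeries so the lone $(+1)$-surgery is done last; every intermediate vertex is then reached from $(S^3,\xist)$ by $(-1)$-surgeries only, hence is Stein fillable and lies in each $\Gamma_*$, while the last edge joins two vertices of $\Gamma_*$. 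Your argument for strong connectivity of $\Gamma_{\textrm{OT}}$ is also not sound as written: surgery on a knot in a Darboux ball only produces connected sums, so the Etnyre--Honda surgeries cannot be assumed to sit in Darboux balls away from an overtwisted disk, and ``the construction can be arranged to keep an overtwisted disk'' is exactly what would need proof. The clean fix is Wand's theorem \cite{Wa15}: along any directed path tightness, once attained, persists, so if the terminal vertex is overtwisted no intermediate vertex can be tight -- every directed path between overtwisted manifolds automatically stays in $\Gamma_{\textrm{OT}}$.

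For the failure of strong connectivity your examples are chosen backwards for three of the graphs: you ask for a manifold that fails strong and weak fillability and has $\textrm{c}=0$, but such a manifold is not a vertex of $\Gamma_{\textrm{strong}}$, $\Gamma_{\textrm{weak}}$ or $\Gamma_{\textrm{c}\neq0}$ at all. What is needed is a manifold that \emph{is} strongly fillable (resp.\ weakly fillable, resp.\ has $\textrm{c}\neq0$) but is \emph{not} Stein fillable, as in \cite{El96,Gh08}; then your tight-case argument (no directed path from $(S^3,\xist)$ without producing a Stein filling) applies verbatim. For $\Gamma_{\textrm{Stein}}$ your key claim -- that no directed path runs from any Stein fillable $(M,\xi)\neq(S^3,\xist)$ back to $(S^3,\xist)$ -- is simply false: contact $(-1)$-surgery along the dual of the $\tb=-1$ unknot carries $(S^1\times S^2,\xist)$ to $(S^3,\xist)$ in a single edge, and Weinstein $2$-handle attachment can decrease $b_1$ and kill homology, so none of your proposed monotone quantities obstructs returning to $S^3$. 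The paper rules out a directed path in the \emph{other} direction: a path of $(-1)$-surgeries from $(S^3,\xist)$ to $(S^1\times S^2,\xist)$, capped off with the standard $4$-ball, would give a simply connected Stein filling of $(S^1\times S^2,\xist)$, contradicting Eliashberg's theorem \cite{El90} that every such filling is $S^1\times D^3$.
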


\begin{thm}\label{thm:OTdiwalk}
	There exists no Hamiltonian diwalk in each of $\Gamma_{\textrm{tight}}$, $\Gamma_{\textrm{Stein}}$, $\Gamma_{\textrm{strong}}$, $\Gamma_{\textrm{weak}}$ and $\Gamma_{\textrm{c}\neq0}$ and thus also no Hamiltonian- or Eulerian dipath. On the contrary, $\Gamma_{\textrm{OT}}$ admits an Eulerian dipath and thus also a Hamiltonian diwalk.
\end{thm}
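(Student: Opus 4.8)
The plan is to exploit a simple monotonicity obstruction on one side, and an explicit construction on the other. First I would observe that a contact $(-1)$-surgery is a Weinstein/Stein cobordism, so if there is a directed edge from $v_{(M_1,\xi_1)}$ to $v_{(M_2,\xi_2)}$ and the source is Stein fillable (resp. strongly/weakly fillable, resp. has $\mathrm{c}\neq 0$, resp. is tight), then so is the target; for tightness this uses that Legendrian surgery preserves tightness (equivalently, fillability implies tightness), and for $\mathrm{c}\neq 0$ one uses the naturality of the contact invariant under the cobordism map induced by a Weinstein $2$-handle attachment. Hence in each of $\Gamma_{\textrm{tight}}, \Gamma_{\textrm{Stein}}, \Gamma_{\textrm{strong}}, \Gamma_{\textrm{weak}}, \Gamma_{\textrm{c}\neq0}$ a ditrack stays inside the subgraph once it enters it, so it suffices to find, in each subgraph, infinitely many vertices with indegree zero \emph{within the subgraph} (e.g. infinitely many contactomorphism types that are not obtained by Legendrian surgery from another vertex of the subgraph). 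A Hamiltonian diwalk would have to start at one such vertex and could never return to visit the others, a contradiction. A clean way to produce such vertices is to take contact structures on distinct underlying $3$-manifolds that bound no Stein (resp. strong, weak) filling with a $2$-handle decomposition reducing the Betti number; more concretely, one can invoke that every contact manifold appearing as the target of a directed edge in $\Gamma_{\textrm{Stein}}$ from a \emph{different} vertex has $b_1$-or-handle-count strictly controlled, so for each $g$ the manifold $\#_g(S^1\times S^2)$ with its unique tight (Stein fillable, etc.) structure, or suitable Brieskorn spheres for the $\mathrm{c}\neq 0$ case, furnish infinitely many sources. The key point to nail down carefully is exactly which invariant is strictly monotone enough to rule out all incoming edges for infinitely many vertices in each of the five subgraphs simultaneously; I expect this case-analysis to be the main obstacle, and I would likely handle $\Gamma_{\textrm{tight}}$ and the filling subgraphs by one argument (using that strong/weak/Stein fillability plus a handle-count bound excludes incoming edges) and $\Gamma_{\textrm{c}\neq0}$ by a separate argument using properties of the Heegaard Floer contact invariant.

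For the positive statement about $\Gamma_{\textrm{OT}}$, the plan is to build the Eulerian dipath by hand using the classification of overtwisted contact structures. By Eliashberg's theorem the vertices of $\Gamma_{\textrm{OT}}$ are in bijection with pairs (closed oriented $3$-manifold $M$, homotopy class of plane field), and by Etnyre--Honda (quoted in the excerpt) $\Gamma_{\textrm{OT}}$ is strongly connected, indeed from any overtwisted vertex there is a directed path to every vertex. I would then enumerate the vertices $w_0, w_1, w_2, \dots$ and the edges $e_0, e_1, e_2, \dots$ and construct the dipath greedily: maintain a finite ditrack already traversing $e_0, \dots, e_{n-1}$, and extend it so as to additionally traverse $e_n$. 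To do this one needs, given any overtwisted $w$ and any directed edge $e_n$ from $u$ to $u'$, a directed walk $w \to \cdots \to u$ (exists by strong connectedness), then the edge $e_n$, giving a finite extension; the only subtlety is that an \emph{Eulerian} dipath must traverse each edge exactly once, so naive concatenation may repeat edges. The standard fix is a back-and-forth/interleaving argument: one shows that between any two vertices of $\Gamma_{\textrm{OT}}$ there are \emph{infinitely many} distinct directed walks (e.g. by inserting a contact $(-1)$-surgery along an unknot followed by its inverse, or by routing through overtwisted structures with arbitrarily large $d_3$-invariant), so at each stage one can choose a connecting walk avoiding the finitely many edges already used. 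Iterating and taking the union yields an infinite ditrack through every edge exactly once, i.e. an Eulerian dipath; it is automatically a Hamiltonian diwalk since it meets every edge and $\Gamma_{\textrm{OT}}$ has no isolated vertices.

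I expect the main obstacle to be the ``avoiding finitely many edges'' step for $\Gamma_{\textrm{OT}}$: one must verify that the Etnyre--Honda directed paths can be chosen with enough flexibility to dodge any prescribed finite edge set, which amounts to checking that $\Gamma_{\textrm{OT}}$ has no directed edge cut — this should follow from the same kind of stabilization trick used to show every overtwisted structure admits infinitely many contact surgery presentations, but it needs to be stated as a lemma (``$\Gamma_{\textrm{OT}}$ is, in a suitable directed sense, highly edge-connected'') and proved before the greedy construction runs. On the negative side, the subtlety is making the monotone invariant genuinely \emph{bounded below along edges yet unbounded over the subgraph}; for the fillability subgraphs the count of Weinstein handles (equivalently, a bound on $b_1$ plus $b_2$ of the filling relative to the starting piece) does the job, while for $\Gamma_{\mathrm{c}\neq0}$ I would fall back on the existence of infinitely many contactomorphism types with $\mathrm{c}\neq 0$ that are not Legendrian surgeries on any other such type, using either a Stein-filling genus/handle obstruction or the behavior of $\mathrm{c}$ under $(+1)$-surgery.
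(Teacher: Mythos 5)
Both halves of your plan have a genuine gap. For the negative statements, your key claim---that each of $\Gamma_{\textrm{tight}}$, $\Gamma_{\textrm{Stein}}$, $\Gamma_{\textrm{strong}}$, $\Gamma_{\textrm{weak}}$, $\Gamma_{\textrm{c}\neq0}$ contains (infinitely many) vertices of indegree zero within the subgraph---is false, and your proposed witnesses $(\#_g S^1\times S^2,\xist)$ are exactly the wrong examples. By the cancellation lemma, contact $(-1)$-surgery along a Legendrian push-off of one of the $(+1)$-framed unknots in the standard diagram of $(\#_{g+1} S^1\times S^2,\xist)$ yields $(\#_g S^1\times S^2,\xist)$; more generally every vertex $(M,\xi)$ of each of these subgraphs receives an edge from $(M,\xi)\#(S^1\times S^2,\xist)$, which again lies in the same subgraph (tightness, all flavours of fillability and $c\neq0$ are preserved under this connected sum). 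Weinstein $2$-handles can \emph{decrease} $b_1$, so no handle-count bound rules out incoming edges. The obstruction has to be of a different shape: not ``no incoming edge'' but ``infinitely many vertices not reachable by a \emph{directed path} from one fixed vertex''. This is how the paper argues: exactly as in its proof for $\Gamma$ (tight vertices are unreachable from overtwisted-land must-visit sets), one uses that there is no directed path from $(S^3,\xist)$ to $(\#_k S^1\times S^2,\xist)$ for any $k\geq1$ (a directed path would give a simply connected Stein filling of $\#_k S^1\times S^2$, contradicting Eliashberg's classification of its Stein fillings), so a Hamiltonian diwalk would have to visit infinitely many vertices before reaching $(S^3,\xist)$ at a finite time --- impossible. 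Your monotonicity idea is in the right spirit, but as formulated the step fails.

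For $\Gamma_{\textrm{OT}}$, your greedy construction is fine as combinatorics, but it hinges entirely on the lemma you defer: between any two vertices of $\Gamma_{\textrm{OT}}$ there is a directed path avoiding any prescribed finite edge set. That lemma is where all the contact geometry lives, and your sketch for it does not work: the ``inverse'' of a contact $(-1)$-surgery is a contact $(+1)$-surgery, which is not a directed edge, so inserting a surgery and its inverse does not produce a directed detour, and routing through structures with large $d_3$ does not by itself let you return. The paper instead verifies Nash--Williams' criterion for Eulerian dipaths (countable, connected, $1$-coherent, solenoidal, unbiased), and the crux --- unbiasedness --- is proved via a reversal lemma: if $(N,\xi^N)$ is obtained by Legendrian surgery from a stabilized overtwisted $(M,\xi^M)\#(S^3,\xi_1)$, then $(M,\xi^M)$ is obtained from $(N,\xi^N)$ by a Legendrian surgery along a \emph{loose} Legendrian representative of the dual knot, using Eliashberg's classification of overtwisted contact structures to identify the result. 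Any honest proof of your avoidance lemma would need essentially this ingredient (indeed, your lemma implies unbiasedness, and conversely it can be deduced from unbiasedness together with the undirected $k$-edge-connectivity and infinite in/outdegree), so the missing idea is precisely the ability to reverse direction in the overtwisted world via loose knots; without it, neither your positive nor your negative argument closes.
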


\subsection{Topological subgraphs of \texorpdfstring{$\Gamma$}{Gamma}}

In Section~\ref{sec:top} we concentrate on topological subgraphs of the contact surgery graph. Let $\Gamma_M$ denote the subgraph that consists of vertices corresponding to contact manifolds where the underlying topological manifolds are all diffeomorphic to a fixed manifold $M$. Similarly we denote by $\Gamma_{(M,\mathfrak s)}$ the subgraph of $\Gamma_M$ consisting of all contact structures on a fixed manifold $M$ lying in the same $spin^c$ structure $\mathfrak s$.

\begin{thm}\label{thm:GammaM}
	The connected components of $\Gamma_M$ are given by $\Gamma_{(M,\mathfrak s)}$, and thus the connected components of $\Gamma_M$ are in bijection with $H_1(M)$.
\end{thm}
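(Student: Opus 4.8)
The plan is to establish two facts: (A) every edge of $\Gamma_M$ joins two contact structures inducing the same $spin^c$ structure, so that each $\Gamma_{(M,\mathfrak s)}$ is a union of connected components of $\Gamma_M$; and (B) each $\Gamma_{(M,\mathfrak s)}$ is connected. Granting these, the final clause is immediate: the $spin^c$ structures on the closed oriented $3$-manifold $M$ form an affine space over $H^2(M;\Z)\cong H_1(M)$, and every $spin^c$ structure is induced by some contact structure (indeed by an overtwisted one), so the $\Gamma_{(M,\mathfrak s)}$ are nonempty and indexed by $H_1(M)$.

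For (A), let an edge be given by a contact $(-1)$-surgery along a Legendrian knot $L\subset(M,\xi_1)$ producing $(M,\xi_2)$, and choose the surgery diffeomorphism to be the identity on $X=M\setminus\nu(L)$, so that $\xi_1$ and $\xi_2$ literally agree on $X$. Then $\mathfrak s_{\xi_1}$ and $\mathfrak s_{\xi_2}$ restrict to the same $spin^c$ structure on $X$, hence differ only by a class in the image of $H^2(M,X)\to H^2(M)$, which is generated by $\PD$ of the surgery dual knot $L^*$; more precisely, the surgery formula for $c_1$ of a contact structure (Gompf, Ding--Geiges--Stipsicz) gives $c_1(\xi_2)-c_1(\xi_1)=\rot(L)\cdot\PD[L^*]$. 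I would then show that recovering $M$ forces this class to vanish: when $L$ is null-homologous the constraint on the smooth surgery coefficient forces $[L^*]=0$ in $H_1(M)$, and in general one runs the same argument with the rational surgery formulas together with the homological restrictions on $L$ and $L^*$ imposed by $\partial_+W\cong M\cong\partial_-W$ for the associated Weinstein cobordism $W$; a small extra step handles $2$-torsion in $H_1(M)$ by tracking $spin^c$ structures rather than only $c_1$. This general homological bookkeeping is the step I expect to be the \textbf{main obstacle}, the delicate issue being that an a priori arbitrary diffeomorphism identifying the two boundary components of $W$ must be shown compatible with the restriction maps from $H^*(W)$ modulo the ambiguity absorbed by the handle.

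For (B), I would fix $\mathfrak s$ and use the single move $\mu$: a contact $(+1)$-surgery along a Legendrian unknot $U$ with $\tb(U)=-2$ lying in a Darboux ball $B$ of a contact manifold $(M,\xi)$. Smoothly $\mu$ is $(-1)$-surgery on an unknot inside $B$, so the ambient manifold is again $M$, recovered by a diffeomorphism that is the identity outside $B$; hence the contact structure is altered only inside the ball $B$, and since the restriction $spin^c(M)\to spin^c(M\setminus B)$ is a bijection, $\mu$ preserves the $spin^c$ structure. As contact $(+1)$-surgery is inverse to contact $(-1)$-surgery, $\mu$ yields an edge of $\Gamma_M$, which by (A) lies in $\Gamma_{(M,\mathfrak s)}$. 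Since $U$ is a stabilized Legendrian knot, a contact $(+1)$-surgery along it produces an \emph{overtwisted} contact structure, and by the surgery formula for the $d_3$-invariant the move $\mu$ shifts the homotopy class of the underlying plane field by a generator of the cyclic group that acts simply transitively on the homotopy classes of plane fields inducing $\mathfrak s$. By Eliashberg's classification of overtwisted contact structures, those homotopy classes correspond bijectively to the overtwisted contact structures inducing $\mathfrak s$; hence the overtwisted vertices of $\Gamma_{(M,\mathfrak s)}$ form a single chain of $\mu$-edges and are all connected, while applying $\mu$ to an arbitrary vertex of $\Gamma_{(M,\mathfrak s)}$ joins it to this chain. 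Therefore $\Gamma_{(M,\mathfrak s)}$ is connected, which together with (A) identifies the connected components of $\Gamma_M$ with the subgraphs $\Gamma_{(M,\mathfrak s)}$.
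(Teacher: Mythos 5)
Your part (B) is essentially the paper's argument (realize connected sum with $(S^3,\xi_1)$ by a single contact $(+1)$-surgery on a $\tb=-2$ unknot in a Darboux ball, then invoke Eliashberg's classification to sweep out all overtwisted structures in a fixed $spin^c$ class and attach the remaining vertices by one such edge), and your reduction of the final clause to nonemptiness of the $\Gamma_{(M,\mathfrak s)}$ is fine. The genuine gap is in part (A), which is the heart of the theorem. Your opening step, ``choose the surgery diffeomorphism to be the identity on $X=M\setminus\nu(L)$, so that $\xi_1$ and $\xi_2$ literally agree on $X$,'' is not something you can arrange: the hypothesis is only that the surgered manifold is \emph{abstractly} diffeomorphic to $M$, and the identifying diffeomorphism need not restrict to the identity on, or even preserve, the knot complement. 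So the conclusion that $\mathfrak s_{\xi_1}$ and $\mathfrak s_{\xi_2}$ agree on $X$ and differ by a multiple of $\PD[L^*]$ already presupposes a normalization you have not justified --- exactly the ``delicate issue'' you yourself flag as the main obstacle. The subsequent step is also only a sketch: for $L$ homologically essential, with torsion in $H_1(M)$, and with $2$-torsion forcing you to track $spin^c$ structures rather than $c_1$, the claim that ``recovering $M$ forces this class to vanish'' is precisely the nontrivial content, and ``rational surgery formulas together with homological restrictions'' is not yet an argument. As written, (A) is incomplete, so the proposal does not prove that no edge of $\Gamma_M$ joins different $spin^c$ structures.

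For comparison, the paper closes exactly this gap by a different, diffeomorphism-free computation: it presents $(M,\xi)$ by a contact surgery link $L$ in $(S^3,\xist)$ with the new surgery knot $L_0$ in its exterior, fixes a spin structure $\mathfrak t$ via a characteristic sublink, observes that since $H_1$ of the surgered manifold agrees with $H_1(M)$ the meridian $\mu_{L_0}$ is nullhomologous (so the same sublink remains characteristic), and then applies the surgery formula for Gompf's $\Gamma$-invariant from~\cite{EKO22} to see that all correction terms are multiples of $[\mu_{L_0}]=0$, whence $\Gamma(\xi',\mathfrak t)=\Gamma(\xi,\mathfrak t)$ and the $spin^c$ structures coincide. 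Since the $\Gamma$-invariant classifies $spin^c$ structures (including torsion phenomena), this replaces both your unjustified normalization of the gluing diffeomorphism and your deferred homological bookkeeping; if you want to salvage your approach, you would need either this kind of invariant computation or an honest argument controlling the identification of the two boundary components of the Weinstein cobordism.
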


The \textit{link} $\lk(M,\xi)$ of a contact $3$-manifold $(M,\xi)$ is  defined as
\begin{equation*}
\lk(M,\xi):=\lk(v_{(M,\xi)}):=\big\{ v_{(N,\eta)} \big| \textrm{d}(v_{(M,\xi)}, v_{(N,\eta)})=1\big\}.
\end{equation*}

In~\cite{HW15} it is shown that the link of $S^3$ in the topological surgery graph is connected and of bounded diameter. The main question still remains open: Is the link of any topological $3$-manifold connected?  It turns out that we can answer this question for contact $3$-manifolds.

\begin{thm}\label{thm:lk}
	The link $\lk(M,\xi)$ of any contact $3$-manifold $(M,\xi)$ is connected and of diameter less than $4$.
\end{thm}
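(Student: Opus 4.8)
The key point is that the link of $v_{(M,\xi)}$ consists of all contact manifolds obtained from $(M,\xi)$ by a single contact $(\pm1)$-surgery along a Legendrian knot, and I want to connect any two such neighbors by a short path in $\Gamma$. First I would set up notation: let $v_{(N_1,\eta_1)}$ and $v_{(N_2,\eta_2)}$ be two vertices in $\lk(M,\xi)$, so there are Legendrian knots $L_1, L_2$ in $(M,\xi)$ and signs $\varepsilon_1,\varepsilon_2\in\{\pm1\}$ with contact $\varepsilon_i$-surgery along $L_i$ producing $(N_i,\eta_i)$. Since a directed edge and its reverse both survive in the undirected link (distance $1$ is symmetric because contact $(+1)$- and contact $(-1)$-surgery are mutually inverse), I am free to treat each surgery as an edge in either direction. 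The strategy is to find a common ``hub'' vertex or a short chain of surgeries linking $(N_1,\eta_1)$ to $(N_2,\eta_2)$, using that the Legendrian knots $L_1$ and $L_2$ can be isotoped to be disjoint in $(M,\xi)$ (a generic $C^\infty$-small perturbation makes two embedded curves in a $3$-manifold disjoint, and the surgered manifold is unchanged).

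Here is the concrete route I expect to work. After making $L_1$ and $L_2$ disjoint, perform contact $\varepsilon_1$-surgery along $L_1$ and contact $\varepsilon_2$-surgery along $L_2$ simultaneously; call the result $(P,\zeta)$. Then $(P,\zeta)$ is obtained from $(N_1,\eta_1)$ by contact $\varepsilon_2$-surgery along the image of $L_2$ (which is disjoint from the surgery region, hence survives as a Legendrian knot in $(N_1,\eta_1)$), so $v_{(N_1,\eta_1)}$ and $v_{(P,\zeta)}$ are adjacent; symmetrically $v_{(N_2,\eta_2)}$ and $v_{(P,\zeta)}$ are adjacent. This already gives $\mathrm{d}(v_{(N_1,\eta_1)}, v_{(N_2,\eta_2)})\le 2$ in $\Gamma$. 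However, I must be careful: the link is an induced subgraph, so the path $v_{(N_1,\eta_1)} - v_{(P,\zeta)} - v_{(N_2,\eta_2)}$ only lies in $\lk(M,\xi)$ if $v_{(P,\zeta)}$ itself is a neighbor of $v_{(M,\xi)}$, i.e.\ if $\mathrm{d}(v_{(M,\xi)}, v_{(P,\zeta)})=1$. That need not hold, since $(P,\zeta)$ is obtained from $(M,\xi)$ by a \emph{two}-component surgery.

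To fix this I would instead route through a vertex that \emph{is} visibly adjacent to $(M,\xi)$: pick an auxiliary Legendrian unknot $U$ in $(M,\xi)$ contained in a Darboux ball disjoint from $L_1$ and $L_2$, and realize the needed intermediate moves as contact $(\pm1)$-surgeries along knots disjoint from $L_1\cup L_2$. Concretely, from $(N_i,\eta_i)$ one can undo the $L_i$-surgery by the inverse contact $(-\varepsilon_i)$-surgery along the dual Legendrian knot, returning to $(M,\xi)$ — but $(M,\xi)$ need not be in the link. The cleanest argument: from $(N_1,\eta_1)$, the knot $L_2$ (pushed off $L_1$) gives a neighbor which is \emph{also} $\varepsilon_1$-surgery on $(N_2,\eta_2)$ along $L_1$; iterating with an extra cancelling pair of a Legendrian unknot with $\mathrm{tb}=-1$ and its contact $(+1)$-surgery dual — a move that changes neither the contact manifold nor its position relative to $(M,\xi)$ — lets me build a path of length at most $3$ all of whose vertices are genuine single-surgery neighbors of $(M,\xi)$. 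Spelling out this bookkeeping of which intermediate contact manifolds are single contact $(\pm1)$-surgeries on $(M,\xi)$ is where the real work lies, and it is also where the bound $4$ (rather than $2$ or $3$) comes from.

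**Main obstacle.** The difficulty is not topological but combinatorial: the link is an \emph{induced} subgraph, so I cannot use an arbitrary short path in $\Gamma$ between two neighbors of $(M,\xi)$; every intermediate vertex must itself be presentable as a \emph{single} contact $(\pm1)$-surgery on $(M,\xi)$. The heart of the proof is therefore to show that the natural two-step "do $L_1$, then do $L_2$'' intermediate can be replaced, up to contactomorphism, by a one-component surgery on $(M,\xi)$ — or else to accept one extra step and produce an explicit length-$\le 4$ chain whose vertices all satisfy this constraint. I expect to exploit that a contact $(\pm1)$-surgery along a split Legendrian unknot of $\mathrm{tb}=-1$ (contact $(-1)$-surgery) leaves the contact manifold unchanged, giving a "stabilization'' trick that converts multi-component surgery descriptions into one-component ones after connect-summing the components along arcs, at the cost of controlled, bounded distance — yielding the stated diameter bound $<4$.
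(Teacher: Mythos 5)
You correctly isolate the crux---the link is an \emph{induced} subgraph, so every intermediate vertex of a path must itself be realizable by a \emph{single} contact $(\pm1)$-surgery on $(M,\xi)$---but your proposal does not actually get past it. The route through the simultaneous surgery $(P,\zeta)$ fails for exactly the reason you state, and the proposed repairs are not proofs: inserting a cancelling pair of surgeries changes nothing (it reproduces the same vertex, so it yields no new edge of the link and no progress along a path), band-summing or handle-sliding components of a surgery link does not convert a two-component surgery on $(M,\xi)$ into a one-component one, and the claim that a contact $(\pm1)$-surgery along a split Legendrian unknot with $\tb=-1$ leaves the contact manifold unchanged is false: contact $(-1)$-surgery along such an unknot is smooth $(-2)$-surgery and produces a connected sum with the lens space $L(2,1)$, while contact $(+1)$-surgery is smooth $0$-surgery and produces an $S^1\times S^2$-summand. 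So none of your intermediate manifolds is ever shown to lie in $\lk(M,\xi)$, and the ``bookkeeping'' you defer is precisely the missing content.

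The paper's proof supplies the ingredients your sketch lacks. It routes every vertex of the link to one fixed hub, $(M,\xi)\#(S^3,\xi_1)$, which lies in the link because it is obtained from $(M,\xi)$ by a single contact $(+1)$-surgery along a $\tb=-2$ Legendrian unknot in a Darboux ball. If $(N,\xi_N)=K(+1)$ is a $(+1)$-surgery neighbor, then by Avdek's result~\cite{Av13} a further contact $(+1)$-surgery along a $\tb=-1$ Legendrian meridian of $K$ yields exactly this negative stabilization $(M,\xi)\#(S^3,\xi_1)$, giving an edge of $\Gamma$ between two link vertices, hence an edge of the induced subgraph. If instead $(N,\xi_N)=L(-1)$, then Lemma~\ref{lem:move}, i.e.\ $L(-1)\cup U_\pm(+1)=L_\pm(+1)$ (proved by handle slides and a lantern destabilization), shows that one surgery carries $(N,\xi_N)$ to $L_\pm(+1)$, which is again a single contact $(+1)$-surgery on $(M,\xi)$ and therefore in the link; the first case then finishes. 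Thus every link vertex is within distance two of the hub \emph{inside} $\lk(M,\xi)$, which gives connectivity and the diameter bound. Without surgery identities of this kind (Avdek's stabilization identity and the move of Lemma~\ref{lem:move}), your approach cannot certify that any intermediate vertex is a single-surgery neighbor of $(M,\xi)$, so the proposal has a genuine gap.
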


\subsection*{Conventions} 

Throughout, this paper we work in the smooth category. We assume all $3$-manifolds to be connected, closed, oriented, and smooth; all contact structures are positive and coorientable. For background on contact surgery and symplectic and Stein cobordisms we refer to~\cite{GS99,DG04,DGS04,Ge08,We14,DK16,Ke17,CEK21,EKO22}. Legendrian links in $(S^3,\xist)$ are always presented in their front projection. We choose the normalization of the $\de_3$-invariant as in~\cite{CEK21,EKO22} which differs from the normalizations in~\cite{Go98,DGS04,DK16} by $1/2$. Using our normalization we see that contact structures on homology spheres have integral $\de_3$-invariants (in particular $\de_3(S^3,\xist)=0$) and that the $\de_3$-invariant is additive under connected sums.

\subsection*{Acknowledgments} 
M.K. thanks Chris Wendl and Felix Schm\"aschke for useful discussions. We would also like to thank the \textit{Mathematisches Forschungsinstitut Oberwolfach} where a part of this project was carried out when M.K. was \textit{Oberwolfach Research Fellow} in August 2020. S.O.~was partially supported by T\"UB{\.I}TAK 1001-119F411.

\section{Properties of the contact surgery graph}\label{sec:properties}

We start by discussing the basic properties of the contact surgery graph $\Gamma$.

\begin{proof}[Proof of Proposition~\ref{prop:diameterDegree}]  $\Gamma$ has infinite diameter since a single surgery can change the rank of the first homology at most by one.

To show that the outdegree of $\Gamma$ is infinite, let $v_{(M,\xi)}$ be a vertex of $\Gamma$. For an even integer $n$, choose a Legendrian knot $L$ in a Darboux-ball in $(M,\xi)$ with Thurston--Bennequin invariant $\tb=1$ and rotation number $\rot=n$. Denote the contact manifold obtained by contact $(-1)$-surgery along $L$ by $L(-1)$. A calculation as for example in~\cite{DGS04,DK16} shows that the homology of $L(-1)$ is
\begin{equation*}
H_1\big(L(-1)\big)=H_1(M)\oplus \Z_{\mu_L},
\end{equation*}
where the $\Z$-summand is generated by a meridian $\mu_L$ of $L$ and that the Poincar\'e dual of the Euler class $\e(L(-1))$ is given by
\begin{equation*}
\e(L(-1))=\e(M,\xi)+n\mu_L,
\end{equation*}
where $\e(M,\xi)$ denotes the Poincar\'e dual of the Euler class of $(M,\xi)$. Thus, we get infinitely many different contact manifolds by contact $(-1)$-surgery from $M$. The same construction with contact $(+1)$-surgery along a Legendrian knot with Thurston--Bennequin invariant $\tb = -1$ provides the fact that the indegree is also infinite.
\end{proof}

\begin{proof} [Proof of Theorem~\ref{thm:thmk-connected}] Let $E$ be a set consisting of $k$-different vertices in $\Gamma$ and let $(M_0,\xi_0)$ and $(M_n,\xi_n)$ be two contact $3$-manifolds representing vertices in $\Gamma\setminus E$. We consider a path 
\begin{equation*}
(M_0,\xi_0)\rightarrow (M_1,\xi)\rightarrow (M_2,\xi_2)\rightarrow\cdots\rightarrow (M_n,\xi_n)
\end{equation*}
in $\Gamma$ between $(M_0,\xi_0)$ and $(M_n,\xi_n)$. We denote by $L_0$ a Legendrian surgery link in $(S^3,\xist)$ representing $(M_0,\xi_0)$. Let $K_{i+1}$ be a Legendrian knot in $(M_i,\xi_i)$ such that contact $(\pm1)$-surgery along $K_i$ is contactomorphic to $(M_{i+1},\xi_{i+1})$. By Lemma 4.7.1 in~\cite{Ke17} we can represent $K_1$ as a Legendrian knot in the complement of $L_0$ and thus $L_1:=L\cup K_1$ is a surgery link in $(S^3,\xist)$ for $(M_1,\xi_1)$. By induction we get surgery links $L_i$ in $(S^3,\xist)$ representing $(M_i,\xi_i)$ describing the above path in $\Gamma$. We will construct a path from $(M_0,\xi_0)$ to $(M_n,\xi_n)$ in $\Gamma\setminus E$.

Let $p\geq2$ be an integer such that no contact structure on $M_i\#L(p,1)$ is in $E$ for all $i=0,\ldots,n$. (Such a $p$ exists because $E$ is finite.) We define a new sequence of Legendrian surgery links as follows. We set $L'_0=L_0$ and $L'_{i+1}=L_i\sqcup U_p(-1)$ where $\sqcup$ denotes the disjoint union of knots and $U_p(-1)$ denotes the contact $(-1)$-surgery along a Legendrian unknot with $\tb=1-p$. It follows that $L'_{i+1}$ represents a contact structure on $M_i\#L(p,1)$. Finally we set $L'_{n+2}$ to be the disjoint union of $L_n$ with $U_p(-1)$ together with a $(+1)$-framed meridian $\mu_{U_p}$ of $U_p$. Since $U_p(-1)\cup\mu_{U_p}(+1)$ yields $(S^3,\xist)$ by~\cite{Av13}, we see that $L'_{n+2}$ represents $(M_n,\xi_n)$.  Thus, a path in $\Gamma\setminus E$ between $(M_0,\xi_0)$ and $(M_n,\xi_n)$ is constructed. The same argument shows that $\Gamma$ is also $k$-edge connected.
\end{proof}

In fact, the above proof directly implies the following corollary.

\begin{cor}\label{cor:k-connected}
	Let $E$ be a finite set of vertices in $\Gamma$ and let $(M_1,\xi_1)$ and $(M_2,\xi_2)$ be two vertices in $\Gamma\setminus E$. Then their distances in the corresponding graphs are related as
	\begin{equation*}
	d_{\Gamma\setminus E}(M_1,M_2)\leq d_{\Gamma}(M_1,M_2)+2.
	\end{equation*}
\end{cor}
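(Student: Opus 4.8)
The plan is to extract the quantitative statement directly from the construction already carried out in the proof of Theorem~\ref{thm:thmk-connected}. Recall that in that argument we started with a shortest path $(M_1,\xi_1)=(M_0',\xi_0')\to\cdots\to(M_n',\xi_n')=(M_2,\xi_2)$ in $\Gamma$ realizing $d_\Gamma(M_1,M_2)=n$, and we produced from it a path in $\Gamma\setminus E$ by (i) forming the connected sum with a lens space $L(p,1)$ along the whole path, and (ii) capping off at the end with a $(+1)$-framed meridian of the Legendrian unknot $U_p$. Step (i) contributes exactly one extra edge (the contact $(-1)$-surgery along $U_p(-1)$, which turns $(M_1,\xi_1)$ into a contact structure on $M_1\#L(p,1)$), and step (ii) contributes exactly one more extra edge (the contact $(+1)$-surgery along $\mu_{U_p}$, which recovers $(M_2,\xi_2)$). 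The intermediate manifolds $M_i'\#L(p,1)$ are, by the choice of $p$, none of them in $E$, so the whole path lies in $\Gamma\setminus E$ and has length $n+2$.

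The proof therefore amounts to spelling out this bookkeeping: the path in $\Gamma\setminus E$ has the form
\begin{equation*}
(M_1,\xi_1)\to (M_0'\#L(p,1),\xi)\to(M_1'\#L(p,1),\xi)\to\cdots\to(M_n'\#L(p,1),\xi)\to(M_2,\xi_2),
\end{equation*}
which visibly has $n+2$ edges. Since such a path exists, $d_{\Gamma\setminus E}(M_1,M_2)\le n+2 = d_\Gamma(M_1,M_2)+2$, which is the claimed inequality. One small point to address is that each step $(M_i'\#L(p,1),\xi)\to(M_{i+1}'\#L(p,1),\xi)$ really is an edge of $\Gamma$: this is because the Legendrian knot $K_{i+1}$ realizing the $i$-th edge of the original path can, by Lemma~4.7.1 of~\cite{Ke17}, be pushed into the complement of the connect-sum region, so the same contact $(\pm1)$-surgery still describes the transition after taking the connected sum with $L(p,1)$; this is exactly the observation already used in the proof of Theorem~\ref{thm:thmk-connected}.

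There is essentially no obstacle here beyond translating an existence statement into a length count, which is why the corollary "directly" follows. The only genuine content being reused is (a) that $E$ is finite, so a prime $p$ avoiding the finitely many forbidden contact structures on each $M_i'\#L(p,1)$ can be chosen (here one uses that there are infinitely many valid $p$, e.g. because the $\de_3$-invariants or the $H_1$-torsion linking forms of the lens-space connect-summands vary with $p$), and (b) the two elementary surgery facts that $U_p(-1)$ realizes the connected sum with $L(p,1)$ and that $U_p(-1)\cup\mu_{U_p}(+1)$ cancels back to $(S^3,\xist)$ by~\cite{Av13}. I would present the proof in three short sentences: recall the path from the previous proof, count its edges as $n+2$, and conclude the distance bound.
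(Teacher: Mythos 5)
Your proposal is correct and is essentially the paper's own argument: the paper states that the corollary follows directly from the construction in the proof of Theorem~\ref{thm:thmk-connected}, namely the path $(M_1,\xi_1)\to M_1\#L(p,1)\to\cdots\to M_2\#L(p,1)\to(M_2,\xi_2)$ obtained by adding the $U_p(-1)$ surgery and the cancelling $(+1)$-framed meridian $\mu_{U_p}$, which has exactly two more edges than the original shortest path. Your bookkeeping (including the use of Lemma~4.7.1 of~\cite{Ke17} for the intermediate edges and the cancellation from~\cite{Av13}) matches the paper's reasoning.
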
 

With the above results we study the existence of Eulerian and Hamiltonian paths and walks.

\begin{proof} [Proof of Theorem~\ref{thm:Eulerpaths}] Since $\Gamma$ is connected, $k$-connected and $k$-edge-connected, for any natural number $k$, and of infinite degree the main results from~\cite{EGV36,EGV38,Na71} immediately imply that $\Gamma$ contains Eulerian and Hamiltonian paths.
	
Since contact $(-1)$-surgery preserves tightness by~\cite{Wa15}, a Hamiltonian diwalk has to start at an overtwisted contact manifold and has to run first through all overtwisted contact manifolds before reaching to a tight contact manifold. But since there exists infinitely many overtwisted contact manifolds this is not possible.
\end{proof}

Using the main result of~\cite{Na66} we conclude the following.

\begin{cor}\label{cor:biased}
	$\Gamma$ is \textbf{biased}, i.e. there exists a subset $X$ of the vertices of $\Gamma$ such that there are infinitely many edges oriented from $X$ to its complement $X^c$, but only finitely many edges oriented from $X^c$ to $X$.
\end{cor}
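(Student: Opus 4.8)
The plan is to produce an explicit subset $X$ of the vertices of $\Gamma$ witnessing the bias, using the main result of~\cite{Na66}, which (for the structure under consideration) guarantees that such a biased partition exists once we exhibit one vertex, or family of vertices, whose outgoing edges to the complement vastly outnumber the incoming ones. The natural candidate is to let $X$ consist of all overtwisted contact manifolds (or, more robustly, to take $X$ to be a single well-chosen overtwisted vertex together with a cofinite tail of convenient vertices, depending on exactly what~\cite{Na66} requires). First I would recall that contact $(-1)$-surgery preserves tightness (as already used in the proof of Theorem~\ref{thm:Eulerpaths}, citing~\cite{Wa15}): hence \emph{every} directed edge whose tail is an overtwisted contact manifold and whose head is tight runs from $X$ to $X^c$, and there can be \emph{no} directed edge from a tight manifold back into the overtwisted locus. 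So if $X$ is the set of overtwisted vertices, the number of edges from $X^c$ to $X$ is exactly zero.

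Next I would argue that infinitely many edges leave $X$ for $X^c$. By Proposition~\ref{prop:diameterDegree} the outdegree of $\Gamma$ is infinite, and in fact the construction there — Legendrian knots in a Darboux ball with $\tb=1$ and varying even rotation number — can be carried out starting from \emph{any} overtwisted contact manifold. Applying contact $(-1)$-surgery to such a stabilized unknot (with $\tb=1$) in an overtwisted $(M,\xi)$ produces a tight region's worth of... more precisely, one should check that for a suitable choice the resulting manifold is tight, or simply observe that there are infinitely many edges from overtwisted manifolds to \emph{some} fixed tight manifold such as $(S^3,\xist)$: indeed, by Ding--Geiges~\cite{DG04} every tight $(M,\xi)$ is obtained from $(S^3,\xist)$ by $(\pm1)$-surgeries, and conversely one can always go the "wrong" way — cancelling a contact $(-1)$-surgery by its $(+1)$-surgery meridian as in the proof of Theorem~\ref{thm:thmk-connected} — to exhibit, from infinitely many distinct overtwisted manifolds, a directed edge into $(S^3,\xist)$ or into a fixed tight target. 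Either way, infinitely many edges point from $X$ into $X^c$.

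Finally I would invoke~\cite{Na66} to package this into the stated conclusion; alternatively, if the two observations above already establish the definition of \textbf{biased} verbatim (infinitely many edges from $X$ to $X^c$, finitely many — here zero — from $X^c$ to $X$), then the corollary follows immediately with $X$ the set of overtwisted vertices, and the reference to~\cite{Na66} serves to situate this in the general theory. The main obstacle I anticipate is the second step: pinning down a clean infinite family of overtwisted contact manifolds each admitting a contact $(-1)$-surgery to a \emph{common} tight target (so that these are genuinely infinitely many \emph{distinct} vertices in $X$, not one vertex counted many times), since Proposition~\ref{prop:diameterDegree} as stated controls the number of edges rather than the number of distinct source manifolds mapping into a fixed vertex. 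This is handled by choosing the overtwisted sources to already differ topologically — e.g. with distinct first homology, which a single surgery changes in controlled fashion — so distinctness is automatic.
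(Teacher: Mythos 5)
Your strategy is genuinely different from the paper's. The paper never exhibits a set $X$: it argues by contraposition through Nash--Williams' theorem, checking that $\Gamma$ is countable, connected, $1$-coherent and $v$-solenoidal, so that the absence of an Eulerian dipath (Theorem~\ref{thm:Eulerpaths}) forces the only remaining condition, unbiasedness, to fail; indeed the authors explicitly remark afterwards that finding such an $X$ explicitly would be interesting. You instead try to verify the stated definition directly with $X$ the set of overtwisted vertices. Half of this is solid: by Wand's theorem contact $(-1)$-surgery preserves tightness, so there are exactly zero edges from $X^c$ into $X$, and no appeal to~\cite{Na66} is needed at all once the definition is checked directly.

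The gap is in your second step, producing infinitely many edges from $X$ into $X^c$. The construction you import from Proposition~\ref{prop:diameterDegree} cannot work: a Legendrian knot contained in a Darboux ball of an overtwisted $(M,\xi)$ is loose (the overtwisted disk can be taken disjoint from the ball and survives the surgery), so contact $(-1)$-surgery along it always yields an overtwisted manifold; those edges never leave $X$. Your fallback is also stated backwards: cancelling a contact $(-1)$-surgery by a $(+1)$-framed meridian, as in the proof of Theorem~\ref{thm:thmk-connected}, corresponds to traversing an edge of $\Gamma$ in the reverse direction, not to a new $(-1)$-surgery from an overtwisted manifold into a tight one, and insisting on a common tight target forces you to verify overtwistedness of specific $(+1)$-surgeries, which you do not do. The repair is easy and uses only facts already in the paper: for a tight $(M,\xi)$, the manifold $(M,\xi)\#(S^3,\xi_1)$ is obtained from $(M,\xi)$ by contact $(+1)$-surgery along a $\tb=-2$ Legendrian unknot in a Darboux ball, so by the cancellation lemma there is a contact $(-1)$-surgery from the overtwisted manifold $(M,\xi)\#(S^3,\xi_1)$ back to $(M,\xi)$; running this over infinitely many tight $(M_p,\xi_p)$ with pairwise non-diffeomorphic underlying manifolds (say Stein fillable lens spaces) gives infinitely many distinct edges from $X$ to $X^c$, with no need for a common target. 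With that fix your direct argument proves the corollary as stated and would even answer the explicit-$X$ question posed after it --- which is reason to double-check that the corollary's paraphrase of ``biased'' coincides with Nash--Williams' precise notion before claiming more than the corollary itself.
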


It would be interesting to find such a set $X$ explicitly.

\begin{proof} [Proof of Corollary~\ref{cor:biased}] The main result of~\cite{Na66} says that an oriented graph admits an Eulerian dipath based at a vertex $v$ if and only if the graph is countable, connected, $1$-coherent, $v$-solenoidal and unbiased. We refer to~\cite{Na66} for the definitions. Since the contact surgery graph $\Gamma$ is connected and countable but admits no Eulerian dipath it is enough to check that $\Gamma$ is $1$-coherent and $v$-solenoidal. $1$-coherency is a condition of the underlying unoriented graph and since $\Gamma$ admits an undirected Euler path, $\Gamma$ is $1$-coherent. Finally, it follows that $\Gamma$ is $v$-solenoidal for any vertex since the indegree and outdegree are infinite for any vertex.
\end{proof}



\section{Contact geometric subgraphs of \texorpdfstring{$\Gamma$}{Gamma}}\label{sec.contact}
Here we study the subgraphs $\Gamma_{\textrm{OT}}$, $\Gamma_{\textrm{tight}}$, $\Gamma_{\textrm{Stein}}$, $\Gamma_{\textrm{strong}}$, $\Gamma_{\textrm{weak}}$ and $\Gamma_{\textrm{c}\neq0}$.

\begin{proof}[Proof of Theorem~\ref{thm:sub}] We first show that $\Gamma_{\textrm{OT}}$ is strongly connected. Let $(M_1,\xi_1)$ and $(M_2,\xi_2)$ be two overtwisted contact manifolds. By~\cite{EH02}, there exist a directed path $p$ from $(M_1,\xi_1)$ to $(M_2,\xi_2)$ in $\Gamma$. We  argue that any vertex in $p$ corresponds to an overtwisted contact manifold. Let us assume the contrary. Since $(M_2,\xi_2)$ is overtwisted, there exists an overtwisted contact manifold $(M_{\textrm{OT}},\xi_{\textrm{OT}})$ that can be obtained by contact $(-1)$-surgery from a tight contact manifold $(M_{\textrm{tight}},\xi_{\textrm{tight}})$ contradicting Wand's result which says that contact $(-1)$-surgery preserves tightness~\cite{Wa15}. 

Next, we consider $\Gamma_*$ with $*=\textrm{tight}$, $\textrm{Stein}$, $\textrm{strong}$, $\textrm{weak}$ or $\textrm{c}\neq0$. To show that $\Gamma_*$ is connected we show that there exist an undirected path in $\Gamma_*$ from $(S^3,\xist)$ to any other contact manifold $(M,\xi)$ with property $*$. We consider the contact $(+1)$-surgery along the Legendrian unknot with Thurston--Bennequin invariant $\tb=-2$ and rotation number $\rot=1$ in $(S^3,\xist)$. It is well-known that the resulting contact manifold is the overtwisted contact structure $\xi_1$ on $S^3$ with normalized $d_3$-invariant equal to $1$~\cite{DGS04}. Then by~\cite{EH02}, there exists a directed path in $\Gamma$ of contact $(-1)$-surgeries from $(S^3,\xi_1)$ to $(M,\xi)$. However, this path runs through at least one overtwisted contact manifold and hence it is not in $\Gamma_*$. In total we get a surgery link $L$ in $(S^3,\xist)$ describing $(M,\xi)$ with only a single contact surgery coefficient $(+1)$. Now we change the order of the surgeries and first perform all contact $(-1)$-surgeries and at the end we perform the single contact $(+1)$-surgery. Since contact $(-1)$-surgery is known to preserve any of the properties $*$ by~\cite{Wa15,El90,We91,OS05}, we get a path in $\Gamma_*$ from $(S^3,\xist)$ to $(M,\xi)$.

The contact surgery subgraphs $\Gamma_{\textrm{tight}}$, $\Gamma_{\textrm{strong}}$, $\Gamma_{\textrm{weak}}$ and $\Gamma_{\textrm{c}\neq0}$ are not strongly connected since there exists in each of this graphs a contact manifold $(M,\xi)$ which is not Stein fillable~\cite{El96,Gh08} and then there cannot be a directed path from $(S^3,\xist)$ to $(M,\xi)$.

Finally we show that $\Gamma_{\textrm{Stein}}$ is not strongly connected. We show that there exists no directed path of contact $(-1)$-surgeries from $(S^3,\xist)$ to $(S^1\times S^2,\xist)$. Let us assume the contrary. Then we get a simply-connected Stein cobordism from $(S^3,\xist)$ to $(S^1\times S^2,\xist)$. We glue this Stein cobordism to the standard $4$-ball filling of $(S^3,\xist)$ to get a simply connected Stein filling $(W,\wst)$ of $(S^1\times S^2,\xist)$. However, by~\cite{El90} any Stein filling of $(S^1\times S^2,\xist)$ is diffeomorphic to $S^1\times D^3$ which is not simply-connected.
\end{proof}

\begin{rem}
	We remark that we have shown that $\Gamma_{OT}$ is a strong connected component of $\Gamma$ and we wonder what the other strong connected components of $\Gamma$ are. Using Wendl's theorem on symplectic fillings of planar contact manifolds~\cite{We10} Plamenevskaya~\cite{Pl12} deduced that any planar Stein fillable contact manifold $(M,\xi)$ cannot be obtained from itself by a sequence of contact $(-1)$-surgeries. It follows that any planar Stein fillable contact manifold is its own strong connected component.
\end{rem}

\begin{ques}
	Is $\Gamma_{OT}$ the only non-trivial strong connected component of $\Gamma$?
\end{ques} 

\begin{rem}
	We know that each of $\Gamma_{\textrm{OT}}$, $\Gamma_{\textrm{tight}}$, $\Gamma_{\textrm{Stein}}$, $\Gamma_{\textrm{strong}}$, $\Gamma_{\textrm{weak}}$ and $\Gamma_{\textrm{c}\neq0}$ has infinite diameter and infinite in- and outdegree. That $\Gamma_{\textrm{tight}}$ and $\Gamma_{\textrm{c}\neq0}$ have infinite indegree follows from the work of Lisca--Stipsicz~\cite{LS04}. As part of their main theorem they describe infinitely many different Legendrian knots such that contact $(+1)$-surgery on them yield tight contact manifolds with non-vanishing contact class. That $\Gamma_{\textrm{strong}}$, $\Gamma_{\textrm{weak}}$, and $\Gamma_{\textrm{Stein}}$ have infinite indegree follows similarly from~\cite{CET17}. The other statements follow from the arguments in the proof of Proposition~\ref{prop:diameterDegree}.
\end{rem}

For the proof of Theorem~\ref{thm:OTdiwalk} we need the following lemma.

\begin{lem}\label{lem:reversingOT}
	Let $(M,\xi^M)$ be an overtwisted manifold and $(M,\xi^M_{\textrm{stab}})$ be its stabilization, i.e. $(M,\xi^M_{\textrm{stab}})=(M,\xi^M)\#(S^3,\xi_1)$. Let $(N,\xi^N)$ be an overtwisted contact manifold which can be obtained from $(M,\xi^M_{\textrm{stab}})$ by a single contact $(-1)$-surgery. Then we can obtain $(M,\xi^M)$ by a contact $(-1)$-surgery from $(N,\xi^N)$.
\end{lem}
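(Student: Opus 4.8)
The plan is to exploit the cancellation phenomenon for contact $(-1)$-surgery pairs together with the flexibility of overtwisted contact structures (Eliashberg's classification). First I would recall the standard fact, already used in the proof of Theorem~\ref{thm:thmk-connected}, that a contact $(-1)$-surgery along a suitable Legendrian push-off of the knot one just surgered on undoes the surgery; more precisely, if $(N,\xi^N)$ is obtained from $(M,\xi^M_{\mathrm{stab}})$ by contact $(-1)$-surgery along a Legendrian knot $K$, then performing contact $(+1)$-surgery along a Legendrian push-off of the new meridian gives back $(M,\xi^M_{\mathrm{stab}})$. The issue is that this inverse operation is a contact $(+1)$-surgery, not a contact $(-1)$-surgery, so it is not directly an edge of $\Gamma$ in the right direction. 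This is exactly where the stabilization summand $(S^3,\xi_1)$ must be spent.

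The key step is therefore to replace the cancelling contact $(+1)$-surgery by a contact $(-1)$-surgery at the cost of a connected sum with $(S^3,\xi_1)$. Concretely, I would use that $(S^3,\xi_1)$ is overtwisted and that contact $(+1)$-surgery along a Legendrian unknot with $\tb=-1$ in $(S^3,\xist)$ produces $(S^3,\xi_1)$, while the inverse relation (contact $(-1)$-surgery) turns $(S^3,\xi_1)$-connect-summed manifolds into the original. Thus from $(N,\xi^N)$ I would do the cancelling contact $(+1)$-surgery to reach $(M,\xi^M_{\mathrm{stab}})=(M,\xi^M)\#(S^3,\xi_1)$, and I want instead a single contact $(-1)$-surgery from $(N,\xi^N)$ to $(M,\xi^M)$. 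Since both $(N,\xi^N)$ and the target $(M,\xi^M)$ are overtwisted and $(N,\xi^N)$ is obtained from $(M,\xi^M)$ topologically by a surgery-then-connect-sum-then-surgery, the underlying smooth manifold $N$ is the result of a Legendrian surgery on $M$; I would check that there is a Legendrian knot $L\subset(N,\xi^N)$ whose smooth type realizes the inverse of that surgery, and whose contact framing is the Legendrian framing, so that topologically $(-1)$-surgery along $L$ yields $M$. Then by Eliashberg's classification of overtwisted contact structures it suffices to match the homotopy class of plane field (equivalently, the $\mathrm{spin}^c$ structure together with the $\de_3$-invariant), and the extra $(S^3,\xi_1)$ summand provides precisely the $\de_3$-shift of $1$ needed to absorb the difference between a $(+1)$- and a $(-1)$-surgery; one arranges the rotation number of $L$ so that the Euler class (hence $\mathrm{spin}^c$ structure) also matches.

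The main obstacle I expect is the bookkeeping of the homotopy-theoretic invariants: one must verify that, after undoing with a $(+1)$-surgery and re-doing with a carefully chosen $(-1)$-surgery on a Legendrian knot in the correct smooth isotopy class inside $(N,\xi^N)$, the resulting overtwisted contact structure on $M$ is genuinely $\xi^M$ and not some other overtwisted structure in a different $\mathrm{spin}^c$ class or with the wrong $\de_3$-invariant. Here I would lean on the $\de_3$ and Euler-class surgery formulas recalled in the proof of Proposition~\ref{prop:diameterDegree} (and in~\cite{DGS04,DK16}), on the additivity of $\de_3$ under connected sum and the normalization $\de_3(S^3,\xi_1)=1$ fixed in the Conventions, and on the freedom to choose the rotation number of the new Legendrian knot (after possibly stabilizing it) to hit any prescribed value of the Euler class compatible with the smooth surgery. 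Once these invariants are matched, Eliashberg's theorem finishes the argument, giving the desired contact $(-1)$-surgery from $(N,\xi^N)$ to $(M,\xi^M)$.
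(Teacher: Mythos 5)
Your overall strategy is the same as the paper's: cancel the given surgery via the dual knot $L^*$ (so that $L^*(+1)$ returns $(M,\xi^M_{\textrm{stab}})$), then replace that $(+1)$-surgery by a $(-1)$-surgery along a Legendrian in the same smooth isotopy class whose contact framing is two larger, and finally identify the result with $(M,\xi^M)$ by matching homotopical invariants and invoking Eliashberg's classification, with the $(S^3,\xi_1)$ summand absorbing the $\de_3$-shift. However, as written your plan has two genuine gaps, both of which the paper closes with one choice you never make. First, Eliashberg's classification~\cite{El89} only applies if the contact structure produced by your $(-1)$-surgery on $(N,\xi^N)$ is itself overtwisted, and contact $(-1)$-surgery on an overtwisted manifold can perfectly well produce a tight one (e.g.\ $(-1)$-surgery along the dual of the unknot describing $(S^3,\xi_1)$ gives back $(S^3,\xist)$); matching $\mathrm{spin}^c$ and $\de_3$ is worthless until overtwistedness of the surgered manifold is guaranteed, and you never address this. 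Second, you need a Legendrian representative of the smooth class of $L^*$ whose contact framing is two \emph{larger} than that of $L^*$; the only mechanism you invoke, stabilization, moves the framing in the wrong direction, and in a tight manifold such a representative need not exist at all. The existence here really uses that $N$ is overtwisted.

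The paper handles both points simultaneously by choosing a \emph{loose} Legendrian realization $L'$ of $L^*$ such that stabilizing $L'$ once positively and once negatively yields a knot formally isotopic to $L^*$: looseness (an overtwisted disk in the complement) both permits this double destabilization and guarantees that $L'(-1)$ is overtwisted, while the balanced choice of one positive and one negative stabilization fixes the rotation-number data so that a local-model computation as in~\cite{DGS05} shows the homotopical invariants of $L'(-1)$ agree with those of $(M,\xi^M)$; only then does Eliashberg's theorem finish the argument. Your plan contains the right cancellation and invariant-counting ingredients, but without the loose/destabilized representative it does not yet constitute a proof.
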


\begin{proof}[Proof of Lemma~\ref{lem:reversingOT}]
	Let $L$ be a Legendrian knot in $(M,\xi^M_{\textrm{stab}})$ such that $L(-1)=(N,\xi^N)$. Let $L^*$ be the dual surgery knot of $L$ in $(N,\xi^N)$. By the cancellation lemma, $L^*(+1)$ is again contactomorphic to $(M,\xi^M_{\textrm{stab}})$. Now we choose a loose Legendrian realization $L'$ of $L^*$ such that if we stabilize $L'$ once positive and once negative we get a Legendrian knot which is formally isotopic to $L^*$. (This is possible since we can destabilize any loose Legendrian knot.) 
	
	We claim that $L'(-1)$ yields $(M,\xi^M)$. Since $L'$ is topologically isotopic to $L^*$ and its contact framing and the contact framing of $L^*$ differ by 2, the contact $(-1)$-surgery along $L'$ yields topologically the same manifold as the contact $(+1)$-surgery along $L^*$ which is in fact $M$. Note that  $L'(-1)$ is overtwisted since $L'$ is a loose knot. Then a straightforward computation in a local model as in~\cite{DGS05} shows that the homotopical invariants of the contact structures agree. Thus by Eliashberg's classification of contact structures~\cite{El89} it follows that the contact structures are contactomorphic.
\end{proof}

	\begin{proof} [Proof of Theorem~\ref{thm:OTdiwalk}] 
	First we remark that by following the proof of Theorem~\ref{thm:thmk-connected},  we conclude that each of the subgraphs $\Gamma_{\textrm{OT}}$, $\Gamma_{\textrm{tight}}$, $\Gamma_{\textrm{Stein}}$, $\Gamma_{\textrm{strong}}$, $\Gamma_{\textrm{weak}}$ and $\Gamma_{\textrm{c}\neq0}$ is $k$-connected and $k$-edge-connected for any natural number $k$. Thus,  each of these subgraphs admits Eulerian and Hamiltonian paths and also Hamiltonian walks. For the directed paths we conclude as in the proof of Theorem~\ref{thm:Eulerpaths} that $\Gamma_*$ does not admit a Hamiltonian diwalk, a Hamiltonian dipath and a Eulerian dipath for $*=\textrm{tight}$, $\textrm{Stein}$, $\textrm{strong}$, $\textrm{weak}$ or $\textrm{c}\neq0$.
	
	On the other hand, we show that an Eulerian dipath exists in $\Gamma_{\textrm{OT}}$. As in the proof of Corollary~\ref{cor:biased} it follows that $\Gamma_{\textrm{OT}}$ is $1$-coherent and $v$-solenoidal. By the main result of~\cite{Na66}, it is therefore enough to show that $\Gamma_{\textrm{OT}}$ is unbiased. Assume there exists a subset $X$ of the vertices of $\Gamma_{\textrm{OT}}$ such that there are infinitely many edges oriented from $X$ to its complement $X^c$, but only finitely many edges oriented from $X^c$ to $X$. As a first step, we show that there exists for any $i\in\N$ a Legendrian knot $K_i$ in an overtwisted contact manifolds $(M_i,\xi_i)$ in $X$, such that all $M_i$ are pairwise non-diffeomorphic and the contact $(-1)$-surgery $K_i(-1)$ along $K_i$ lies in $X^c$. (This step will not use the overtwistedness.)
	
	By assumption we know that there are infinitely many edges pointing out of $X$. For any $i\in\N$, we choose some Legendrian knot $K'_i$ in an overtwisted contact manifold $(M,\xi)$ in $X$ such that $K'_i(-1)$ lies in $X^c$. We show that we can obtain infinitely many of the $K'_i(-1)$ by a contact $(-1)$-surgery from infinitely many different manifolds in $X$. For that we first observe, that we can get an overtwisted contact structure $\xi_{\textrm{OT}}$ on $M\# L(i+1,1)$ by a single contact $(+1)$-surgery along a Legendrian unknot $U_i$ in a Darboux ball in $M\setminus K_i$. Since only finitely many edges point into $X$ we conclude that an infinite subset of the $(M\# L(i+1,1),\xi_{\textrm{OT}})$ are elements of $X$ again. Performing a contact $(-1)$-surgery along $K_i$ in $(M\# L(i+1,1),\xi_{\textrm{OT}})$ yields $K_i(-1)\#L(i+1,1)$ with an overtwisted contact structure. And canceling the contact $(+1)$-surgery along $U_i$ by a contact $(-1)$-surgery along a push-off of $U_i$ yields $K_i(-1)$. Thus we can conclude that there exists an infinite family of Legendrian knots $K_i$ in different overtwisted contact manifolds $(M_i,\xi_i)$ in $X$ such that $K_i(-1)$ are elements of $X^c$.
	
	In the next step, which only works for overtwisted contact manifolds, we show that we can get back from the $K_i(-1)$ to $X$ by contact $(-1)$-surgeries. We write $(M_i,\xi_i)$ as $(M,\xi_i')\#(S^3,\xi_1)$. By Lemma~\ref{lem:reversingOT} there exists a contact $(-1)$-surgery along a Legendrian knot in $K_i(-1)$ yielding $(M_i,\xi_i')$ and from $(M_i,\xi_i')$ we can get back to $(M_i,\xi_i)$ by another contact $(-1)$-surgery. Thus we have constructed an infinite family of edges pointing from $X^c$ into $X$ contradicting the assumption and finishing the proof of Theorem~\ref{thm:OTdiwalk}.
\end{proof}

Next, we study the difference of the distance functions.

\begin{thm}
Given two overtwisted contact manifolds $(M_1,\xi_1)$ and $(M_2,\xi_2)$. \\
(1) The distance between $(M_1,\xi_1)$ and $(M_2,\xi_2)$ in $\Gamma_{\textrm{OT}}$ is at most $2$ larger than their distance in $\Gamma$.\\
(2) The minimal lengths of directed paths from $(M_1,\xi_1)$ to $(M_2,\xi_2)$ agree in $\Gamma$ and $\Gamma_{\textrm{OT}}$.
\end{thm}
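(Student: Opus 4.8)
The plan is to prove both statements simultaneously by showing that an arbitrary directed path (respectively, an arbitrary shortest path) in $\Gamma$ between two overtwisted contact manifolds can be corrected to lie entirely inside $\Gamma_{\textrm{OT}}$, at the cost of at most two extra undirected edges for (1) and at no cost at all for (2). For (2), I would first observe that a directed path of contact $(-1)$-surgeries starting at the overtwisted manifold $(M_1,\xi_1)$ automatically consists of overtwisted contact manifolds: overtwistedness is preserved under contact $(-1)$-surgery, since attaching a Weinstein $2$-handle cannot make an overtwisted manifold tight (equivalently, a contact $(-1)$-surgery on an overtwisted manifold remains overtwisted because the overtwisted disk survives in the complement of the surgery region, as one can arrange the surgery link to be disjoint from a fixed overtwisted disk). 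Hence every directed path from $(M_1,\xi_1)$ to $(M_2,\xi_2)$ in $\Gamma$ already lies in $\Gamma_{\textrm{OT}}$, and the minimal length of such a path is therefore the same whether computed in $\Gamma$ or in $\Gamma_{\textrm{OT}}$. This gives (2).

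For (1), let $n = \textrm{d}_{\Gamma}(v_{(M_1,\xi_1)}, v_{(M_2,\xi_2)})$ and fix a geodesic
\begin{equation*}
(M_1,\xi_1) = (N_0,\eta_0) - (N_1,\eta_1) - \cdots - (N_n,\eta_n) = (M_2,\xi_2)
\end{equation*}
in $\Gamma$, where each edge is either a contact $(-1)$-surgery in the forward or the backward direction. The problem is that some intermediate vertices $(N_j,\eta_j)$ may be tight. The idea is to stabilize the whole path: pick a sufficiently large prime-power connect-summand, or more simply work with $(S^3,\xi_1)$, and replace each $(N_j,\eta_j)$ by $(N_j,\eta_j)\#(S^3,\xi_1)$, which is overtwisted. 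One checks that connect-summing with the fixed overtwisted $(S^3,\xi_1)$ is compatible with the surgery edges: if $(N_{j+1},\eta_{j+1})$ is obtained from $(N_j,\eta_j)$ by a contact $(\pm1)$-surgery along a Legendrian knot $L$, then after isotoping $L$ off the region where the connected sum is performed, the same surgery along $L$ produces $(N_{j+1},\eta_{j+1})\#(S^3,\xi_1)$ from $(N_j,\eta_j)\#(S^3,\xi_1)$. So we obtain an undirected path of length $n$ in $\Gamma_{\textrm{OT}}$ from $(M_1,\xi_1)\#(S^3,\xi_1)$ to $(M_2,\xi_2)\#(S^3,\xi_1)$. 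It remains to connect $(M_i,\xi_i)$ to $(M_i,\xi_i)\#(S^3,\xi_1)$ by a single edge in $\Gamma_{\textrm{OT}}$ for $i=1,2$: performing a contact $(+1)$-surgery along a Legendrian unknot with $\tb=-2$, $\rot=1$ in a Darboux ball in $(M_i,\xi_i)$ realizes this connected sum (using the well-known fact that this surgery on the unknot in $(S^3,\xist)$ gives $(S^3,\xi_1)$), and $(M_i,\xi_i)$ is overtwisted so the edge lies in $\Gamma_{\textrm{OT}}$. Appending these two edges to the stabilized path gives an undirected path in $\Gamma_{\textrm{OT}}$ of length $n+2$, proving (1).

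The main obstacle I anticipate is making the ``connect-sum is compatible with the edges'' step fully rigorous: one needs that a Legendrian surgery knot can always be pushed into the complement of a fixed ball (so that the local model near the connect-sum sphere is untouched), which follows from Legendrian general position, and that performing the surgery there commutes with taking the connected sum, which is essentially a local statement but should be spelled out. A secondary point is to confirm that $(M_i,\xi_i)\#(S^3,\xi_1)$ is overtwisted and that the intermediate stabilized manifolds are overtwisted — this is immediate since overtwistedness is inherited under connected sums. One should also note that the path constructed for (1) need not be a geodesic in $\Gamma_{\textrm{OT}}$, but that is fine: we only claim the distance in $\Gamma_{\textrm{OT}}$ is at most $n+2$, and combined with the obvious inequality $\textrm{d}_{\Gamma} \le \textrm{d}_{\Gamma_{\textrm{OT}}}$ this is exactly statement (1). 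Finally, for (2) one may want to remark that this is consistent with, and in fact follows from the same reasoning as, the proof of Theorem~\ref{thm:OTdiwalk}, where the preservation of overtwistedness under contact $(-1)$-surgery along forward-directed paths was already used.
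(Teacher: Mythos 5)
Your argument for part (1) is correct and is essentially the same stabilization idea as the paper's, implemented slightly differently: you connect-sum every vertex of a geodesic with $(S^3,\xi_1)$ and pay one extra edge at each end (via the $(+1)$-surgery on the $\tb=-2$ unknot in a Darboux ball), whereas the paper adds a $2$-component surgery diagram of $(S^3,\xi_0)$ at the start of the path and uses Eliashberg's classification to see that $\#(S^3,\xi_0)$ does not change the overtwisted endpoints; both give length $\textrm{d}_\Gamma+2$, and your version has the mild advantage of not needing the classification of overtwisted contact structures for the endpoints.

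Part (2), however, rests on a false lemma. You claim that contact $(-1)$-surgery preserves overtwistedness because ``one can arrange the surgery link to be disjoint from a fixed overtwisted disk''; this is exactly what fails for non-loose (exceptional) Legendrian knots, whose complement is tight, and Legendrian surgery on such knots can and does produce tight manifolds. Indeed, if your claim were true it would contradict the Etnyre--Honda result~\cite{EH02} quoted in this paper, by which any contact manifold --- including $(S^3,\xist)$ --- is reachable from any overtwisted one by a directed path of contact $(-1)$-surgeries. The correct argument runs in the opposite direction, as in the proofs of Theorems~\ref{thm:sub} and~\ref{thm:OTdiwalk}: by Wand's theorem~\cite{Wa15} contact $(-1)$-surgery preserves \emph{tightness}, so if a directed path from $(M_1,\xi_1)$ to $(M_2,\xi_2)$ passed through a tight vertex, every later vertex, in particular the overtwisted endpoint $(M_2,\xi_2)$, would be tight --- a contradiction. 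With this substitution your conclusion that every directed path between two overtwisted manifolds lies in $\Gamma_{\textrm{OT}}$ (and hence statement (2)) is correct; also note that your closing remark mischaracterizes the earlier proofs, which use preservation of tightness, not of overtwistedness.
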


\begin{proof} (1) Let $p$ be a minimal (undirected) path between $(M_1,\xi_1)$ and $(M_2,\xi_2)$ in $\Gamma$. If every vertex in $p$ corresponds to an overtwisted contact manifold the distances in $\Gamma$ and $\Gamma_{\textrm{OT}}$ agree. In general, however, the path $p$ might run through tight contact manifolds. To prevent this we choose an ordered surgery link $L$ in $(M_1,\xi_1)$ such that contact $(\pm1)$-surgery in that given order along $L$ corresponds to the path $p$. We denote by $\xi_0$ the overtwisted contact structure on $S^3$ with vanishing normalized $\de_3$-invariant. A $2$-component surgery diagram of $(S^3,\xi_0)$ is shown in Figure~\ref{fig:S3} (center). We add this surgery diagram in a Darboux ball in the exterior of $L$ to the surgery link, where we first perform the surgeries along the two new Legendrian knots and afterwards the surgeries corresponding to $p$. Then any contact manifold in the path $p$ is replaced by a connected sum with $(S^3,\xi_0)$. 
By Eliashberg's classification of overtwisted contact structures~\cite{El89}, it follows that we have constructed a new path $p'$ from $(M_1,\xi_1)$ to $(M_2,\xi_2)$ in $\Gamma_{\textrm{OT}}$ of length $2$ larger than the length of $p$.\\
(2) By Wand's theorem~\cite{Wa15}, any directed path in $\Gamma$ between two overtwisted contact manifolds cannot run through a tight contact manifold, see proof of Theorem~\ref{thm:sub}.
\end{proof}

\begin{figure}[htbp] 
\centering
\def\svgwidth{\columnwidth}
\begingroup%
  \makeatletter%
  \providecommand\color[2][]{%
    \errmessage{(Inkscape) Color is used for the text in Inkscape, but the package 'color.sty' is not loaded}%
    \renewcommand\color[2][]{}%
  }%
  \providecommand\transparent[1]{%
    \errmessage{(Inkscape) Transparency is used (non-zero) for the text in Inkscape, but the package 'transparent.sty' is not loaded}%
    \renewcommand\transparent[1]{}%
  }%
  \providecommand\rotatebox[2]{#2}%
  \newcommand*\fsize{\dimexpr\f@size pt\relax}%
  \newcommand*\lineheight[1]{\fontsize{\fsize}{#1\fsize}\selectfont}%
  \ifx\svgwidth\undefined%
    \setlength{\unitlength}{491.94372176bp}%
    \ifx\svgscale\undefined%
      \relax%
    \else%
      \setlength{\unitlength}{\unitlength * \real{\svgscale}}%
    \fi%
  \else%
    \setlength{\unitlength}{\svgwidth}%
  \fi%
  \global\let\svgwidth\undefined%
  \global\let\svgscale\undefined%
  \makeatother%
  \begin{picture}(1,0.22305288)%
    \lineheight{1}%
    \setlength\tabcolsep{0pt}%
    \put(0,0){\includegraphics[width=\unitlength,page=1]{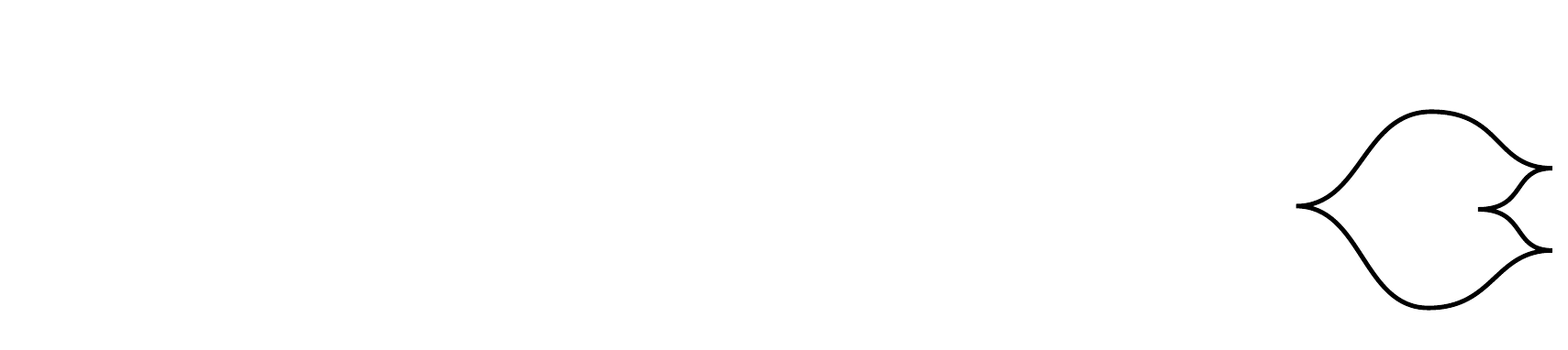}}%
    \put(0.94385818,0.14944507){\color[rgb]{0,0,0}\makebox(0,0)[lt]{\lineheight{1.25}\smash{\begin{tabular}[t]{l}$+1$\end{tabular}}}}%
    \put(0,0){\includegraphics[width=\unitlength,page=2]{S3csg.pdf}}%
    \put(0.50591194,0.15880708){\color[rgb]{0,0,0}\makebox(0,0)[lt]{\lineheight{1.25}\smash{\begin{tabular}[t]{l}$+1$\end{tabular}}}}%
    \put(0.61968705,0.18314049){\color[rgb]{0,0,0}\makebox(0,0)[lt]{\lineheight{1.25}\smash{\begin{tabular}[t]{l}$+1$\end{tabular}}}}%
    \put(0,0){\includegraphics[width=\unitlength,page=3]{S3csg.pdf}}%
    \put(0.06640797,0.16441847){\color[rgb]{0,0,0}\makebox(0,0)[lt]{\lineheight{1.25}\smash{\begin{tabular}[t]{l}$+1$\end{tabular}}}}%
    \put(0.21015276,0.12924489){\color[rgb]{0,0,0}\makebox(0,0)[lt]{\lineheight{1.25}\smash{\begin{tabular}[t]{l}$-1$\end{tabular}}}}%
  \end{picture}%
\endgroup%

\caption{Contact surgery diagrams of contact structures on $S^3$. Left: $(S^3,\xi_{-1})$, center: $(S^3,\xi_{0})$, right: $(S^3,\xi_{1})$~\cite{DGS04,EKO22}.} 
\label{fig:S3}
\end{figure}

\section{Topological subgraphs of \texorpdfstring{$\Gamma$}{Gamma}}\label{sec:top}

Before we prove Theorem~\ref{thm:GammaM} we first discuss the corresponding result for $S^3$.

\begin{lem}
$\Gamma_{S^3}$ is connected.
\end{lem}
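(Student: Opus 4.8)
The plan is to use the classification of contact structures on $S^3$. Besides the unique tight structure $\xist$ (which has $\de_3=0$), the overtwisted ones are, by Eliashberg's classification~\cite{El89}, in bijection with $\Z$ via the $\de_3$-invariant; write $(S^3,\xi_d)$ for the overtwisted contact structure on $S^3$ with $\de_3(\xi_d)=d$. Thus the vertices of $\Gamma_{S^3}$ are $v_{(S^3,\xist)}$ together with $v_{(S^3,\xi_d)}$ for $d\in\Z$, and it suffices to link all of these by edges of $\Gamma$ both of whose endpoints represent contact structures on $S^3$.

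The main building block is a single contact $(+1)$-surgery along a Legendrian unknot $U$ with $\tb(U)=-2$ and $\rot(U)=1$ placed inside a Darboux ball. Performed in $(S^3,\xist)$ it yields the overtwisted structure $(S^3,\xi_1)$ by~\cite{DGS04}, so there is an edge in $\Gamma_{S^3}$ joining $v_{(S^3,\xist)}$ and $v_{(S^3,\xi_1)}$. Performed inside a Darboux ball in an arbitrary $(S^3,\xi_d)$, this surgery yields, by the standard fact that a contact surgery supported in a Darboux ball produces a connected summand, the contact manifold $(S^3,\xi_d)\#(S^3,\xi_1)$; as the topological surgery coefficient on the unknot is $-1$, the underlying manifold is again $S^3$, the structure is overtwisted, and additivity of $\de_3$ under connected sums identifies it as $(S^3,\xi_{d+1})$. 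Hence $v_{(S^3,\xi_d)}$ and $v_{(S^3,\xi_{d+1})}$ are joined by an edge of $\Gamma_{S^3}$ for every $d\in\Z$. Concatenating these edges connects all the overtwisted vertices, and the first edge further connects them to $v_{(S^3,\xist)}$; so $\Gamma_{S^3}$ is connected.

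In order, the steps are: (1) recall the classification of contact structures on $S^3$ and the resulting vertex set of $\Gamma_{S^3}$; (2) invoke the standard fact that a contact surgery localized in a Darboux ball turns the ambient manifold into a connected sum with the corresponding surgery on $(S^3,\xist)$; (3) combine additivity of $\de_3$ with Eliashberg's classification to see that the building-block surgery realizes $\xi_d\mapsto\xi_{d+1}$ and $\xist\mapsto\xi_1$; (4) assemble the edges into a path between any two prescribed vertices. I do not expect a genuine obstacle here; the only points requiring care are the bookkeeping of $\de_3$ and the remark that, although contact $(-1)$-surgery cannot leave $\xist$ for an overtwisted structure since tightness is preserved~\cite{Wa15}, its inverse contact $(+1)$-surgery can, so the needed edge at $v_{(S^3,\xist)}$ does exist (and is oriented into that vertex). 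This suffices for connectedness — and incidentally shows that $\Gamma_{S^3}$ is not strongly connected.
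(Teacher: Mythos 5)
Your proof is correct and takes essentially the same route as the paper: the same building block (contact $(+1)$-surgery on the $\tb=-2$, $\rot=1$ Legendrian unknot in a Darboux ball, i.e.\ connected summing with $(S^3,\xi_1)$), combined with additivity of the normalized $\de_3$-invariant and Eliashberg's classification to produce the edges $\xist$--$\xi_1$ and $\xi_d$--$\xi_{d+1}$ for all $d\in\Z$. The paper merely phrases this via disjoint unions of the explicit surgery diagrams from~\cite{DGS04}, which amounts to the same connected-sum argument.
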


\begin{proof} Recall that by the work of Eliashberg $\xist$ is the unique tight contact structure on $S^3$~\cite{El92} and that the overtwisted contact structures are in one-to-one correspondence to the homotopy classes of tangential $2$-plane fields~\cite{El89}, which are on homology spheres in bijection with the integers via their normalized $d_3$-invariants~\cite{Go98}. We denote the unique overtwisted contact structure on $S^3$ with normalized $d_3$-invariant equal to $n$ by $\xi_n$. 

Contact surgery diagrams for all contact structures on $S^3$ where explicitly described in~\cite{DGS04}. A contact surgery diagram of $\xi_1$ is given by the contact $(+1)$-surgery along the Legendrian unknot with Thurston--Bennequin invariant $t=-2$ and rotation number $r=1$ shown on the right of Figure~\ref{fig:S3}. The contact $(\pm1)$-surgery diagram along the $2$-component link shown on the left of Figure~\ref{fig:S3} represents $\xi_{-1}$. The disjoint union of two surgery diagrams describes a connected sum of the underlying contact manifolds. Since the $d_3$-invariant behaves additively under the connected sum, we get contact surgery diagrams of all contact structures on $S^3$ by taking appropriate disjoint unions of the contact surgery diagrams of $\xi_1$ and $\xi_{-1}$.

It follows that we can get $(S^3,\xi_1)$ by a single contact $(+1)$-surgery from $(S^3,\xist)$ and that there exists a contact $(+1)$-surgery on $(S^3,\xi_k)$ yielding $(S^3,\xi_{k+1})$ and thus conversely a contact $(-1)$-surgery from $(S^3,\xi_k)$ to $(S^3,\xi_{k-1})$.
\end{proof}

\begin{proof}[Proof of Theorem~\ref{thm:GammaM}]
	First we discuss the case that $M$ is a homology sphere. Then we can get any overtwisted contact structure on $M$ from a fixed contact structure on $M$ by connected summing with overtwisted contact structures of $S^3$~\cite{DGS04}. It follows that any two overtwisted contact structures on $M$ can be connected in $\Gamma_M$. Now let $\xi_{\textrm{tight}}$ be some tight contact structure on $M$. Then there there exists a single contact $(+1)$-surgery along a Legendrian unknot in $(M,\xi_{\textrm{tight}})$ yielding $(M,\xi_{\textrm{tight}})\# (S^3,\xi_1)$ which is overtwisted and thus it follows that $\Gamma_M$ is connected.
	
	If the underlying manifold is not a homology sphere it gets slightly more complicated since the classification of tangential $2$-plane fields is more involved. As a further invariant we have the $spin^c$ structure of a contact structure. However, it is known that for a given contact structure with $spin^c$ structure $\mathfrak s$ we can get any other overtwisted contact structure with the same $spin^c$ structure $\mathfrak s$ by connected summing with the overtwisted contact structures on $S^3$~\cite{DGS04}. Thus, we can apply the same argument as in the homology sphere case to deduce that $\Gamma_{(M,\mathfrak s)}$ is connected.
	
	It remains to show that there is no edge connecting two different $spin^c$ structures on $M$. For that we use Gompf's $\Gamma$-invariant which classifies $spin^c$ structures~\cite{Go98}. Let $\mathfrak s$ be a $spin^c$ structure on $M$ and $\xi$ be a contact structure inducing $\mathfrak s$. Let $L_0$ in $(M,\xi)$ be a Legendrian knot such that contact $(+1)$- or contact $(-1)$-surgery along $L_0$ yields another contact structure $\xi'$ on $M$. We want to show that $\xi'$ induces the same $spin^c$ structure $\mathfrak s$. For that we choose a spin structure $\mathfrak t$, describe $(M,\xi)$ by a contact surgery diagram along a Legendrian link $L=L_1\cup\ldots\cup L_n$ in $(S^3,\xist)$, and present $L_0$ as a knot in the exterior of $L$. To show that $\xi$ and $\xi'$ induce the same $spin^c$ structures it is enough to compute that Gompf's $\Gamma$ invariants of $\xi$ and $\xi'$ with respect to $\mathfrak t$ agree. We present $\mathfrak t$ via a characteristic sublink $(L_j)_{j\in J}$, $J\subset\{1,\ldots,n\}$, of $L$. Since the homologies of $M$ and $L_0(\pm1)$ agree we deduce that $\mu_{L_0}$ is nullhomologous in $L_0(\pm1)$ and that $J$ is also a characteristic sublink of the surgery diagram $L_0\cup L$ of $L_0(\pm1)$. Then we can use the formula for computing Gompf's $\Gamma$-invariant from~\cite{EKO22} to compute
	\begin{align*}
	\Gamma(\xi',\mathfrak t)-\Gamma(\xi,\mathfrak t)=\frac{1}{2} \left(\sum_{i=0}^n r_i\mu_i+\sum_{j\in J}(Q'\mu)_j\right)-\frac{1}{2} \left(\sum_{i=1}^n r_i\mu_i+\sum_{j\in J}(Q\mu)_j\right)=0,
	\end{align*}
	where $r_i$ denotes the rotation number and $\mu_i$ the meridian of $L_i$, $Q'$ and $Q$ denotes the linking matrices of $L_0\cup L$ and $L$, and all non-canceling terms are multiples of $\mu_0=0$.
\end{proof}

We now turn to the proof of Theorem~\ref{thm:lk}. For the proof, we need the following lemma.

\begin{lem}\label{lem:move}
Let $L$ be a Legendrian knot in some contact $3$-manifold $(M,\xi)$ and $U$ be a Legendrian meridian of $L$ with $\tb=-1$. Let $L_\pm$ denote the positive/negative stabilization of $L$ (similarly for $U$). Then contact $(-1)$-surgery along $L$ followed by contact $(+1)$-surgery along $U_\pm$ is contactomorphic to the contact $(+1)$-surgery along $L_\pm$ (see Figure~\ref{fig:Move}), i.e. $$L(-1)\cup U_\pm(+1)=L_\pm(+1).$$
\end{lem}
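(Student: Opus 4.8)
The plan is to reduce the claimed identity to an explicit local modification of a contact surgery diagram, and then to compare the two resulting contact structures by means of Eliashberg's classification of overtwisted contact structures. By Ding--Geiges~\cite{DG04} the ambient contact manifold $(M,\xi)$ admits a contact surgery diagram in $(S^3,\xist)$, and by Lemma~4.7.1 in~\cite{Ke17} we may assume that $L$ together with its Legendrian meridian $U$ lies in the exterior of this diagram. The two sides of the asserted equality are then presented by contact surgery diagrams that differ only in the local piece drawn in Figure~\ref{fig:Move}, so it suffices to understand this local change.

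First I would identify the underlying smooth $3$-manifolds. Since $\tb(U)=-1$ we have $\tb(U_\pm)=-2$, so contact $(+1)$-surgery along $U_\pm$ is smooth $(-1)$-surgery along an unknot linking $L$ once, whereas contact $(-1)$-surgery along $L$ is smooth $(\tb(L)-1)$-surgery. Blowing down the $(-1)$-framed meridian, i.e. performing a Rolfsen twist, leaves the knot type of $L$ unchanged, as only a single strand of $L$ passes through the twist region, and raises its framing by one; hence the left-hand diagram describes $\tb(L)$-surgery along $L$. On the other hand $L_\pm$ is smoothly isotopic to $L$ with $\tb(L_\pm)=\tb(L)-1$, so contact $(+1)$-surgery along $L_\pm$ is again smooth $\tb(L)$-surgery along $L$. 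Thus the two diagrams present diffeomorphic $3$-manifolds.

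It remains to match the contact structures. Both are overtwisted, since each arises by a contact $(+1)$-surgery along a stabilized Legendrian knot, namely $U_\pm$ in the manifold $L(-1)$ and $L_\pm$ in $(M,\xi)$, and contact $(+1)$-surgery along a stabilized Legendrian knot is well known to yield an overtwisted contact structure~\cite{DGS04}. By Eliashberg's classification~\cite{El89} it therefore suffices to show that the two contact structures represent the same homotopy class of tangent plane field, that is, the same $spin^c$ structure and the same $\de_3$-invariant (more precisely, the same value of Gompf's $\Gamma$-invariant). That they induce the same $spin^c$ structure is verified as in the proof of Theorem~\ref{thm:GammaM}, by computing Gompf's $\Gamma$-invariant from the two diagrams via the formula of~\cite{EKO22}: the two values agree because the diagrams describe the same smooth manifold and the blow-down identifies the relevant characteristic sublinks. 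The equality of the $\de_3$-invariants is then a direct computation with the formula of~\cite{EKO22}: passing from the left diagram to the right corresponds, on the level of the associated $4$-manifolds, to blowing down a single $(-1)$-framed $2$-handle, and the induced changes in the self-intersection number, the signature, and the Euler characteristic cancel in the formula, the number of contact $(+1)$-surgeries being unchanged.

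The main obstacle I anticipate is on the contact side rather than the smooth side: one must pin down the signs in the local model so that the blow-down genuinely produces the stabilization $L_\pm$ with the same sign as the stabilization of $U$, and not $L_\mp$, and one must carry out the $\de_3$- and $\Gamma$-invariant computations consistently with the normalization fixed in the conventions. A shorter but less self-contained alternative would be to deduce the identity directly from the cancellation lemma for contact surgeries~\cite{Av13} combined with a Legendrian handle slide, thereby transferring the stabilization of the meridian $U$ onto its surgery-dual knot $L$; this bypasses the invariant computation at the price of invoking the handle-move calculus.
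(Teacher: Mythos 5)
Your strategy (identify the smooth manifolds by a blow-down, observe both contact structures are overtwisted, then invoke Eliashberg's classification after comparing homotopy invariants) is genuinely different from the paper's proof, which is pure contact Kirby calculus: the two diagrams of Figure~\ref{fig:Move} are related by two contact handle slides~\cite{DG09,Av13,CEK21} followed by a lantern destabilization~\cite{LS11}, cf.~\cite{EKO22}, with no invariant computations and no appeal to overtwistedness at all. Your route is instead in the spirit of the paper's proof of Lemma~\ref{lem:reversingOT}, and your closing remark about handle slides is essentially the paper's actual argument. The smooth identification and the overtwistedness of both sides are fine in principle (the fact that contact $(+1)$-surgery along a stabilized Legendrian knot is overtwisted is standard, though it is not really contained in~\cite{DGS04}; it is proved e.g.\ via the Lutz-twist picture of~\cite{DGS05}).

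The genuine gap is in the step "same $spin^c$ structure and same $\de_3$-invariant, computed from the two diagrams." The surgery formula for $\de_3$ that you want to quote from~\cite{DGS04,EKO22} is only defined (and only computes a rational number via $c^2$) when the first Chern class of the resulting contact structure is torsion. The lemma is asserted for arbitrary $(M,\xi)$ and arbitrary $L$, so in general (e.g.\ $b_1(M)>0$, or $L$ chosen so that the surgered manifold has non-torsion Euler class) the three-dimensional part of the homotopy classification is not captured by $\de_3$ at all; you would need Gompf's refined obstruction from~\cite{Go98}, or a direct comparison of the plane fields in a local model on the surgered solid tori as in~\cite{DGS05}, neither of which your sketch addresses. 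In addition, the two assertions you wave through — that the blow-down "identifies the relevant characteristic sublinks" (it changes the framing of $L$ by one, hence flips the parity conditions defining characteristic sublinks) and that the changes in $c^2$, $\sigma$, $\chi$ "cancel" (this requires $\langle c_1,E\rangle=\pm1$ for the exceptional class and consistent bookkeeping of $\rot(L_\pm)=\rot(L)\pm1$ versus $\rot(U_\pm)=\pm1$) — are exactly the sign-sensitive computations on which such an argument can silently fail, and they are not carried out. So as written the proof covers at best the torsion case and leaves the key verification incomplete; the paper's diagrammatic argument (or your alternative via handle moves) avoids these issues entirely.
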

\begin{figure}[htbp] 
	\centering
	\includegraphics[width=7cm]{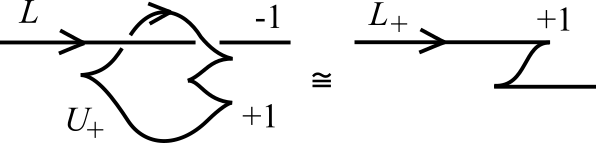}
	\caption{From a contact $(-1)$-surgery to a contact $(+1)$-surgery.}
	\label{fig:Move}
\end{figure}	

\begin{proof} We relate the two contact surgery diagrams in Figure~\ref{fig:Move} by first performing two handle slides~\cite{DG09,Av13,CEK21} followed by a lantern destabilization~\cite{LS11}, cf.~\cite{EKO22}. We first slide the red knot $L$ over $U_+$ as in Figure~\ref{fig:Move_Proof}$(i)$. Then, we slide $U_+$  over the red one as in Figure~\ref{fig:Move_Proof}$(ii)$, where the handle slides are indicated via the arrows. After isotoping Figure~\ref{fig:Move_Proof}$(iii)$ to get Figure~\ref{fig:Move_Proof}$(iv)$, we apply the lantern destabilization to get Figure~\ref{fig:Move_Proof}$(v)$.
\end{proof}

\begin{figure}[htbp] 
\centering
\includegraphics[width=9cm]{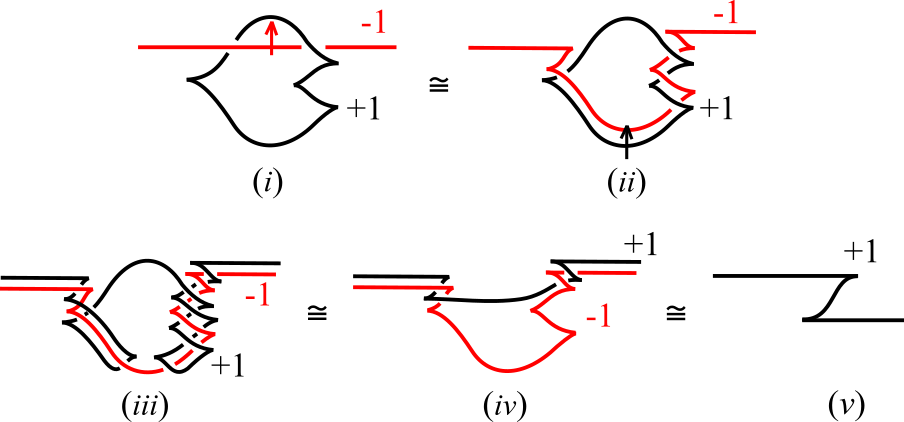}
\caption{The proof of Lemma~\ref{lem:move} via two handle slides and a lantern destabilization.}
\label{fig:Move_Proof}
\end{figure}	

\begin{proof}[Proof of Theorem~\ref{thm:lk}] Let $(N,\xi_N)$ be a contact manifold in the link of $(M,\xi)$. We construct a path of length one or two in $\lk(M,\xi)$ from $(N,\xi_N)$ to $(M,\xi)\#(S^3,\xi_1)$. (We recall that $(M,\xi)\#(S^3,\xi_1)$ can be obtained from $(M,\xi)$ by a single contact $(+1)$-surgery along a Legendrian unknot with $\tb=-2$ in a standard Darboux ball in $(M,\xi)$.) 
	
	First, we consider the case that we can obtain $(N,\xi_N)$ by a contact $(+1)$-surgery along a Legendrian knot $K$ in $(M,\xi)$. In~\cite{Av13} it is shown that performing a contact $(+1)$-surgery $K$ followed by a contact $(+1)$-surgery along a Legendrian meridian $U$ of $K$ with $\tb=-1$ corresponds to a negative stabilization of $(M,\xi)$ and thus yields $(M,\xi)\#(S^3,\xi_1)$.
	
	The case that $(N,\xi_N)$ arises as contact $(-1)$-surgery from $(M,\xi)$ can be reduced to the first case by applying  Lemma~\ref{lem:move} once.
\end{proof}

The proof of Theorem~\ref{thm:lk} directly implies the following corollary.

\begin{cor}
	(1) If $(N,\xi)$ is in the link of $(M,\xi)$. Then 
	\begin{equation*}
	d_{\lk(M,\xi)}\big((N,\xi),(M,\xi)\#(S^3,\xi_1)\big)\leq2.
	\end{equation*}
	(2) If $(N,\xi)$ can be obtained from $(M,\xi)$ by a single contact $(+1)$-surgery, then $(N,\xi)$ can be obtained from $(M,\xi)\#(S^3,\xi_1)$ by a single contact $(-1)$-surgery.
\end{cor}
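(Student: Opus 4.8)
The plan is to read both statements off the path already constructed in the proof of Theorem~\ref{thm:lk}, together with the fact, recalled in the introduction, that contact $(+1)$- and contact $(-1)$-surgery are mutually inverse operations, witnessed by passing to the dual surgery knot.

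For part~(1): the proof of Theorem~\ref{thm:lk} produces, for any $(N,\xi)$ in $\lk(M,\xi)$, a path of length one or two \emph{inside} the induced subgraph on $\lk(M,\xi)$ running from $(N,\xi)$ to $(M,\xi)\#(S^3,\xi_1)$. It has length one when $(N,\xi)$ is a contact $(+1)$-surgery on $(M,\xi)$, by the observation of~\cite{Av13}, and length two when $(N,\xi)$ is a contact $(-1)$-surgery on $(M,\xi)$, using Lemma~\ref{lem:move} to first pass to a contact $(+1)$-surgery $L_\pm(+1)$ on $(M,\xi)$ and then invoking the first case. Hence $d_{\lk(M,\xi)}\big((N,\xi),(M,\xi)\#(S^3,\xi_1)\big)\leq 2$, which is exactly the assertion; no new argument is required.

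For part~(2): suppose $(N,\xi)=K(+1)$ for some Legendrian knot $K$ in $(M,\xi)$. The first case in the proof of Theorem~\ref{thm:lk} gives $K(+1)\cup U(+1)=(M,\xi)\#(S^3,\xi_1)$, where $U$ is a Legendrian meridian of $K$ with $\tb=-1$. Let $U^*$ be the dual surgery knot of $U$ inside $(M,\xi)\#(S^3,\xi_1)$. Since contact $(-1)$-surgery along a dual surgery knot cancels the corresponding contact $(+1)$-surgery, we obtain $U^*(-1)=K(+1)=(N,\xi)$, so $(N,\xi)$ is indeed obtained from $(M,\xi)\#(S^3,\xi_1)$ by a single contact $(-1)$-surgery.

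I do not expect a genuine obstacle, since the corollary essentially repackages intermediate steps from the proof of Theorem~\ref{thm:lk}. The only mild points of care are checking that the path in part~(1) stays within the induced subgraph on $\lk(M,\xi)$ — which is already guaranteed there, as both $(M,\xi)\#(S^3,\xi_1)$ and the intermediate manifold $L_\pm(+1)$ arise from $(M,\xi)$ by a single contact $(+1)$-surgery — and phrasing the cancellation in part~(2) correctly in terms of dual surgery knots rather than as an abstract inverse.
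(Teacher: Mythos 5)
Your proposal is correct and matches the paper, which states that the corollary follows directly from the proof of Theorem~\ref{thm:lk}: part (1) is read off the length-one-or-two path constructed there, and part (2) is the cancellation of the $(+1)$-surgery along the meridian $U$ via its dual knot, exactly as you argue. No further comment is needed.
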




\end{document}